\newcommand{\ee}{\mathbf{e}}
\def\e{\epsilon}
\def\eps {\epsilon}
\def \be {  \varpi}
\newcommand{\fer}[1]{(\ref{#1})}
\newcommand{\R}{\mathbb R}
\def\be#1\ee{\begin{equation}#1\end{equation}}
\def\var{\varepsilon}
\newcommand{\bq}{\begin{equation}}
\newcommand{\eq}{\end{equation}}
\newtheorem{thm}{Theorem}
\theoremstyle{remark}
\newtheorem{rem}{Remark}
\theoremstyle{definition}
\newenvironment{equations}{\equation\aligned}{\endaligned\endequation}
\begin{document}

\title[Non-Maxwellian kinetic equations modeling  wealth distribution]{NON-MAXWELLIAN KINETIC EQUATIONS MODELING THE EVOLUTION OF WEALTH DISTRIBUTION}

\author{GIULIA FURIOLI}
\address{DIGIP, University of Bergamo, viale Marconi 5, 24044 Dalmine, Italy}
\email{giulia.furioli@unibg.it} 

\author{ADA PULVIRENTI}
\address{Department of Mathematics, University of Pavia, 
via Ferrata 1,
Pavia, 27100 Italy}
\email{ada.pulvirenti@unipv.it}

\author{ELIDE TERRANEO}
\address{Department of Mathematics,
University of Milan, via Saldini 50, 20133 Milano, Italy }
\email{elide.terraneo@unimi.it}

\author{GIUSEPPE TOSCANI}
\address{Department of Mathematics, University of Pavia,
via Ferrata 1,
Pavia, 27100 Italy}
\email{giuseppe.toscani@unipv.it}

\maketitle

\begin{abstract}
We introduce a class of new one-dimensional linear Fokker--Planck type equations describing the evolution in time of the wealth in a multi-agent society. The equations are obtained, via a standard limiting procedure, by introducing an economically relevant variant to the kinetic model introduced in 2005 by Cordier, Pareschi and Toscani according to previous studies by Bouchaud and M\'ezard. The steady state of wealth predicted by these new Fokker--Planck equations remains unchanged with respect to the steady state of the original Fokker--Planck equation. However, unlike the original equation, it is proven by a new logarithmic Sobolev inequality with weight and classical entropy methods that the solution converges exponentially fast to equilibrium.    
\end{abstract}
\vskip 3mm

\keywords{Kinetic models; Non-Maxwellian kernels; Fokker--Planck equations; Relative entropies; Large-time behavior; Log-Sobolev inequalities.}
\vskip 2mm
{ AMS Subject Classification: 35Q84; 82B21; 91D10, 94A17.}

\section{Introduction}

In recent years, mathematical modeling of multi-agent systems became a challenging and productive field of research involving both applied mathematicians and physicists. Among other aspects, this research activity introduced new applications of statistical physics to interdisciplinary fields ranging from  the classical biological context \cite{BT15,BDK,GR,Ha1,KP12,RT,Tos13}, to the new aspects of socio-economic dynamics \cite{CCCC,CFL,NPT,PT13,SC}.

A significant part of this activity was devoted to the study of classical problems in economy \cite{BaTo,BaTo2,BST,Ch02,ChaCha00,CCM,ChChSt05,CPP,DY00,DMT,DMT1,gup,IKR98,MD,MaTo07,Sl04,TBD}, including  the important aspect of the justification of Pareto's discovery of fat tails in wealth distribution of western societies \cite{Par}
Beside the kinetic models of Boltzmann type introduced in recent years to enlighten the formation of an unequal distribution of wealth among trading agents \cite{CCCC,PT13}, a Fokker--Planck type equation assumed a leading role. This equation, which describes the time-evolution of the density $f(t,w)$ of a system of agents with personal wealth $w\ge0$ at time $t \ge 0$ reads
  \be\label{FP2c}
 \frac{\partial f(t,w)}{\partial t} = J(f)(t,w) = \frac \sigma{2}\frac{\partial^2 }
 {\partial w^2}\left( w^2 f(t,w)\right) + \lambda \frac{\partial }{\partial v}\left(
 (w-m) f(t,w)\right).
 \ee
In \fer{FP2c},  $\lambda$, $\sigma$ and $m$ denote  positive constants related to essential properties of the trade rules of the agents.  

The Fokker--Planck equation \fer{FP2c} has been first obtained by Bouchaud and M\'ezard in Ref. \cite{BM} through a mean field limit procedure applied to a stochastic dynamical equation for the wealth density.  The same equation was subsequently found in Ref. \cite{CoPaTo05} by resorting to an asymptotic procedure applied to a Boltzmann-type kinetic model for binary trading in presence of risks. 
 
 The importance of equation \fer{FP2c} in the study of wealth distribution is related to the economic relevance of its stationary solution density, given by the inverse
Gamma density  \cite{BM,CoPaTo05}
 \be\label{equi2}
f_\infty(w) =\frac{(\mu\, m)^{1+\mu}}{\Gamma(1+\mu)}\frac{\exp\left(-\frac{\mu\, m}{w}\right)}{w^{2+\mu}}.
 \ee
  In \fer{equi2} $\mu$ denotes the positive constant
  \be\label{mu}
  \mu =2 \frac{\lambda}{\sigma}.
  \ee
This stationary distribution,  as predicted by the  analysis of the italian economist Vilfredo Pareto \cite{Par}, exhibits a power-law tail for large values of the wealth  variable.

In addition to the references \cite{BM} and  \cite{CoPaTo05}, the same equation with a modified drift term appears when considering suitable asymptotics of Boltzmann-type equations for binary trading in presence of taxation \cite{Bisi}, in the case in which taxation is described by the redistribution operator introduced in Ref. \cite{BST}. Systems of Fokker--Planck equations of type \fer{FP2c} have been considered in Ref. \cite{DT08} to model wealth distribution in different countries which are coupled by mixed trading. Further, the operator $J(f)$ in equation \fer{FP2c} and its equilibrium kernel density have been considered in a non homogeneous setting to obtain Euler-type equations describing the joint evolution of wealth and propensity to invest \cite{DT07},  to study the evolution of wealth in a society with agents using personal knowledge to trade \cite{PT14}, and to observe the consequences on the distribution of wealth by exercising a control at the level of microscopic interactions \cite{DPT}.
Last, equation \fer{FP2c} has been studied with data in the whole real line (thus allowing agents to have debts) in Ref. \cite{TT18}, by showing that the large-time behavior of the solution remains unchanged if the quantity of debts does not destroy the positivity of the mean wealth.

These results contributed to retain that this equation represents a quite satisfactory description of the time-evolution of wealth density towards a Pareto-type equilibrium in a trading society. 

However, as far as the large-time behavior is concerned, convergence to equilibrium results for the solution to \fer{FP2c} are not fully satisfactory, since convergence in strong sense was proven to hold or at a polynomial rate for general initial densities \cite{TT}, or at exponential rate for a very restricted class of initial densities, that need to be chosen \emph{very close} to the equilibrium density \cite{FPTT17} This makes an essential difference between equation \fer{FP2c} and the standard Fokker--Planck equation, where convergence to equilibrium has been proven to hold exponentially in time with an explicit rate for all initial data satisfying natural and non restrictive physical conditions \cite{Tos99} The physical way to study convergence is to resort to the decay of relative entropy, and to logarithmic Sobolev inequalities \cite{AMTU,OV} Unlikely,  as discussed in Ref. \cite{MJT}, and more recently in Ref. \cite{FPTT17} (cf. also Ref. \cite{TT18}), this type of inequalities do not seem to be available in presence of variable diffusion coefficients as they are those in \fer{FP2c}.    

As a matter of fact, the proof of exponential convergence to equilibrium is not only a pure mathematical result. The (apparent) lack of exponential convergence to equilibrium for the solution to equation \fer{FP2c} brings into question if the mathematical modeling leading to the Fokker--Planck equation \fer{FP2c} takes into account in a right way all the principal aspects of the agents trading. Indeed, like it happens for the solution to the famous Boltzmann equation \cite{TV,Vi1}, exponential convergence to equilibrium is the main motivation to justify why in the real world we are always observing a wealth distribution with Pareto tails. 

In this paper we identify one of the modeling points that should be improved in the choice made at a kinetic level in Ref. \cite{CoPaTo05}, where the Boltzmann collision operator has been selected to be of Maxwell type \cite{Bob,Cer}. In classical kinetic theory, Maxwellian pseudo-molecules describe a gas in which the collision kernel does not depend on the relative velocity of the molecules. Analogously, in the economic context, the Maxwellian hypothesis corresponds to make the strong assumption that the trading between agents does not depend on the amount of wealth put into the trade. While this choice is easy to handle with from a mathematical point of view, it naturally leads to eliminate a part of human behavior from the trading. Consequently, this simplification could introduce some fault into the model, since, in contrast to classical kinetic theory of rarefied gases, various aspects of human behavior play a substantial role into the mathematical modeling of socio-economic phenomena \cite{ABG16,BCKS,BHT,BKS,Maini08,BS,GT-ec}.

With the aim of improving the model, we will introduce in the underlying kinetic equation of Boltzmann type a variable collision kernel which is designed to exclude unphysical interactions between agents. Within this choice, we will obtain in the limit a new class of Fokker--Planck equations which contain equation \fer{FP2c} as a particular case. The new Fokker--Planck equations read
\begin{equation}\label{FP}
 \frac{\partial f(t,w)}{\partial t} =  J_\delta(f)(t,w) = \frac\sigma 2 \frac{\partial^2 }{\partial w^2}
 \left(w^{2+\delta}f(t,w)\right )+ 
\lambda\, \frac{\partial}{\partial w}\left( w^\delta(w - m ) f(t,w)\right).
 \end{equation}
 In \fer{FP}  $\delta$ is a positive constant, with $0<\delta\leq1$. Equation \fer{FP} has a unique equilibrium density of unit mass, given by the inverse Gamma function
 \be\label{new-equ}
 f_\infty^\delta(w) = \frac{(\mu\, m)^{1+\delta + \mu}}{\Gamma(1+\delta + \mu)}\frac{\exp\left(-\frac{\mu\, m}{w}\right)}{w^{2+\delta +\mu}}.
 \ee
In \fer{new-equ} $\mu$ is the positive constant defined in \fer{mu}. Hence, the presence of the constant $\delta$ is such that the Pareto index in the equilibrium density of the target Fokker--Planck equation is increased by the amount $\delta$. 

The main novelty related to the Fokker--Planck equation \fer{FP} we are going to prove (Theorem \ref{expon}) is that its solution can be shown to converge exponentially, at an explicit rate, towards the equilibrium  density \fer{new-equ}. For this reason, we believe that equation \fer{FP} furnishes a better description of the process of relaxation of the wealth distribution density in a multi-agent society.

To end this introduction, we outline again the importance of taking into account typical aspects of human behavior in the mathematical modeling of multi-agent systems. Following this line of thought, a recent result on service times in a call-center \cite{GT17}, further generalized to various skewed phenomena in Ref. \cite{GT18}, led to build, on the basis of the well consolidated prospect theory of Kahneman and Twersky \cite{KT,KT1}, a Fokker--Planck equation with a lognormal density as a steady state.  While this equation is consistent with the huge amount of data available for the phenomena under study, it has the further feature that the solution density is exponentially convergent towards equilibrium. 

It is noticeable that  various and essential aspects related to human behavior of agent trading have been fully considered in the binary interactions considered in Ref. \cite{CoPaTo05} beyond  the choice of a constant collision kernel in the Boltzmann collision operator. Indeed, in addition to risk, which is naturally part of the trading process, one of the fundamental assumptions there was to take into account, according to the proposal of Chakrabarti and coworkers \cite{Ch02,ChaCha00,CCM}, the agent's tendency to save  using only a small part of money in a single trade, the so-called \emph{saving propensity}. 
 
 Let us describe now the structure of the paper. In Section \ref{model2} we will illustrate the reasons behind the modification made at the model considered in Ref. \cite{CoPaTo05}, which lead to introduce a variable collision kernel. Then, in Section \ref{gambling} we show how this choice modifies the explicit stationary solution of the simple kinetic model known as \emph{pure gambling model}.  Section \ref{quasi} deals with the asymptotic procedure leading from the kinetic description of Boltzmann type to the Fokker--Planck equation \fer{FP} in presence of a variable collision kernel and a linear interaction. Finally, in Section \ref{large-time} we study the large-time behavior of the solution to the Fokker--Planck equation, by showing exponential convergence towards the equilibrium density \fer{new-equ} in relative entropy. The result is achieved owing to  a new  logarithmic Sobolev type inequality (Theorem \ref{LS-theo}) which relates Fisher information with weight with Shannon relative entropy.  This inequality is obtained along the lines of the proof of the pioneering paper by Otto and Villani \cite{OV}. In addition, we will show that exponential convergence in $L^1$-norm towards equilibrium holds for a larger class of solutions departing from initial data satisfying suitable conditions at the boundary.

Last, Section \ref{n-l} will deal with purely nonlinear models, directly derived from the bilinear kinetic equation of Boltzmann type introduced in Ref. \cite{CoPaTo05}, in presence of the non-Maxwellian collision kernel considered in Section \ref{quasi}. The main achievement is that the presence of the kernel is such that the limit Fokker--Planck equation, while preserving the mean wealth, has the coefficients of diffusion and drift that depend from suitable moments of the solution itself, thus introducing nonlinear effects into the evolution. 
  
  \section{Kinetic modeling of trading activity}\label{model2}   

The description of  the evolution of wealth in a multi-agent system has its  roots in statistical physics, and,  in particular, on methods borrowed from kinetic theory of rarefied gases. These methods  have been successfully applied to construct master equations of Boltzmann type, usually referred to as kinetic equations, describing the
time-evolution of some characteristic of the agents, like wealth, opinion, knowledge and others, and, in some cases, to recover the emergence of universal behaviors through their equilibria \cite{CaceresToscani2007,CCCC,NPT,PT13,SC}.

The building block of kinetic modeling is represented by microscopic interactions, which, similarly to binary interactions between  velocities in the classical kinetic theory of rarefied gases, are specialized to describe the variation law  of some selected agent trait, like  wealth or opinion. Then, from the microscopic law of variation of the number density consequent to the (fixed-in-time) way of interaction, one will construct a kinetic equation able to capture both the time evolution and the steady profile of the phenomenon under study \cite{NPT,PT13}.

The population of agents (the traders)  is considered homogeneous with respect to the personal wealth.   
In addition, it is assumed that agents are indistinguishable \cite{PT13} This means that an agent's state at any instant of time $t\ge 0$ is completely characterized by the amount $w \ge 0$ of their wealth. 
Consequently, the state of the agents system will be fully characterized by the unknown density (or distribution function) $f = f(t,w)$, of the wealth $w\in \R_+$ and the time $t\ge 0$. The  time evolution of the density is described, as shown later on, by a kinetic equation of Boltzmann type.
The precise meaning of the density $f$ is the following. Given the system of traders, and given an interval or a more complex sub-domain $D  \subseteq \R_+$, the integral
\[
\int_D f(t,w)\, dw
\]
represents the number of traders which  are characterized by an amount of wealth $w \in D$  at time $t \ge 0$. It is assumed that the density function is normalized to one, that is for all $t\ge 0$
\[
\int_{\R_+} f(t,w)\, dw = 1.
\]
The change in time of the density is due to the fact that agents of the system are subject to trades, and continuously upgrade their amounts of wealth $w$  at each trade.  To maintain the connection with classical kinetic theory of rarefied gases, we will always refer to a single upgrade of the quantity $w$ as an \emph{interaction}. 

In what follows, to avoid inessential complications, and to outline the main novelties of the new approach, we will refer to a linear interaction, which takes into account all the trading aspects of the original nonlinear model considered in Ref. \cite{CoPaTo05}. According to the binary trade introduced in Ref. \cite{CoPaTo05} we  assume that the elementary change of wealth $w \in \R_+$  of an agent of the system trading with the market is the result of three different contributes
 \be\label{lin}
 w^* = (1-\lambda)w +  \lambda v  + \eta\, w.
 \ee  
In \fer{lin} $\lambda$ is a positive constant, such that $\lambda \ll 1$. The first term in \fer{lin} measures the wealth  that remains in the hands of the trader  who entered into the trading market with a (small) percentage  $\lambda w$ of his wealth. The constant $\lambda$ quantifies the saving propensity of the agent, namely the human perception that it results quite dangerous to trade the whole amount of wealth in a single interaction. The second term represents the amount of wealth the trader receives from the market as result of the trading activity.  Here $v\in \R_+$ is sampled by a certain distribution $\mathcal E$ which describes the situation of the market itself. Note that in principle the constant in front of the wealth $v$ could be different from $\lambda$, say $\chi$. However, as we shall see later on, the  choice $\lambda \not=\chi$ will not introduce essential differences in the behavior of the target model. Finally, the last term takes into account the risks of the market. In \fer{lin}  $\eta$ is a centered random variable with finite variance $\sigma \ll1$, which in general  is assumed such that $ \eta \ge -1 + \lambda$, to ensure that even in a risky trading market, the post trading wealth remains non negative.  We will further assume that the random variable $\eta$ takes values on a bounded set, that is $-1 + \lambda \le \eta \le \lambda^* < +\infty$. This condition is coherent with the trade modeling, and corresponds to put a bound from above at the possible random gain that a trader can have in a single interaction.

By classical methods of kinetic theory \cite{PT13}, and resorting to the derivation of the classical linear Boltzmann equation of elastic rarefied gases \cite{Cer}, one can easily show that  the time variation of the wealth density is due to the balance between the post and pre-interaction variations of wealth density  due to the microscopic trades of type \fer{lin}. Hence, the wealth density $f(t,w)$ of the agents system 
obeys, for all smooth functions $\varphi(v)$ (the observable quantities), to the integro-differential equation 
 \begin{equation}
  \label{line-w}
 \frac{d}{dt}\int_{\R_+}f(t,w)\varphi(w)\,dw  = 
  \Big \langle \int_{\R_+\times \R_+} K(v,w) \left( \varphi(w^*)-\varphi(w) \right) f(t,w)\mathcal E(v)
\,dv\,dw\Big \rangle.
 \end{equation}
In \fer{line-w}, the function $\mathcal E(v)$, $v\in \R_+$ is the distribution density of wealth of the market, while the function $K(v,w)$ denotes the collision kernel, which assigns to the interaction $(v,w)$ a certain probability to occur. In \fer{line-w} the notation $\langle \cdot \rangle$ denotes mathematical expectation, and takes into account the presence of the random variable $\eta$ in \fer{lin}.

In kinetic theory of rarefied gases, where the pair $v,w$ denotes the velocities of the particles which enter a collision, the collisional kernel is assumed to be function of the  relative velocity $|v-w|$, as well as, in higher dimensions, of the deflection angle \cite{Cer}.  The most important and used kernels describe hard (respectively  weak) interactions and depend on a positive (respectively negative) power $|v-w|^\delta$ of the relative velocity. A great simplification then occurs when considering Maxwellian pseudo-molecules, characterized by the value $\delta = 0$. The main consequences of this choice can be fully understood by looking at the exhaustive review by Bobylev \cite{Bob},  who first discovered the possibility to  resort to Fourier transform analysis in the nonlinear Boltzmann equation for Maxwellian pseudo-molecules. The choice of a kernel independent of the relative velocity is also at the basis of the famous one-dimensional kinetic model known as Kac caricature of the Boltzmann equation, introduced by Kac in Ref. \cite{Kac}.

In the socio-economic context, the simplification of the Maxwell molecules, leading to a constant interaction kernel, has been regularly assumed \cite{FPTT17,PT13}. This simplification, maybe not so well justified from a modeling point of view, led to the possibility, as in the Boltzmann setting, to make use of the Fourier transformed version of the underlying kinetic equations \cite{BaTo,BaTo2,DMT,DMT1,MaTo07}, and to extract from the Fourier version a number of properties about the large-time behavior of the solution to equation \fer{line-w} and the main features of its equilibrium density, including the connections between the microscopic interactions and the corresponding formation of Pareto tails. In fact, unlike  the classical Bolzmann equation of rarefied gas dynamics, where the Maxwellian equilibrium density is easy to find, in the economic context, except in some simple cases \cite{BaTo,BaTo2,Sl04}, a precise analytic description of the emerging equilibria in a kinetic model of the Boltzmann equation is very difficult to obtain.

Going back to the possible weakness of the Maxwellian assumption, a careful analysis of the  economic transaction \fer{lin} allows us to conclude that the choice of a constant collision kernel leads to consider as possible also interactions which human agents would exclude \emph{a priori}. This is evident for interactions in which the wealth of the agent trading with the market is equal to zero (or extremely small). In this case, the outcome of the trade results in a net loss of money for the market, and it seems difficult to justify that an agent of the market would accept to trade. Likewise, this is true if the agent that trades with a certain amount of wealth, does not receive (excluding the risk) some wealth back from the market. In other words, trades in which $w$ or $v$ are equal to zero or extremely small should be excluded by the trading agents. On the contrary, the possibility to receive a consistent amount of wealth from the market, or for a market agent the possibility to trade with agents that have a consistent wealth needs to be considered more probable.

Hence, in the economic setting, it seems natural to consider collision kernels that select this behavior. A simple but consistent assumption is to define 
 \be\label{Ker}
 K(v,w) = \kappa\cdot \left( vw\right)^\delta,
  \ee
for some constants $0 < \delta \le 1$ and $\kappa >0$.  This kernel, which is clearly different from the collision kernel of elastic  particles, excludes the economic transactions in which one of the agents has no wealth to put on the game, and enhances transactions in which the amount of money of both agents is conspicuous. 

By taking into account this new assumption, we consider  in  the following that the wealth density satisfies the linear kinetic model 
 \begin{equation}
  \label{line}
 \frac{d}{dt}\int_{\R_+}f(t,w)\varphi(w)\,dw  = 
 \kappa\, \Big \langle \int_{\R_+\times \R_+} \left( vw\right)^\delta \left( \varphi(w^*)-\varphi(w) \right) f(t,w)\mathcal E(v)
\,dv\,dw\Big \rangle.
 \end{equation} 
The model includes the standard Maxwellian linear kinetic model, which is obtained for $\delta = 0$.
While the presence of the collision kernel  is more realistic from a modeling point of view, it introduces  additional difficulties, not present in the original Maxwellian assumption. This is evident for example when computing the evolution of moments, which, as it happens for the classical Boltzmann equation, obey to equations which are not in close form. 

Before studying on a general level the consequences of the introduction of the collision kernel  \fer{Ker}, we illustrate through the simple example of the \emph{pure gambling} model \cite{BaTo},  how  the equilibrium density is modified with respect to the one of the original  kinetic equation with a constant kernel.  To achieve this result, we will take essential advantage from the fact that the bilinear Boltzmann equation for the pure gambling model considered in Ref. \cite{BaTo} allows for an explicit derivation of  the steady state density in various situations.

Subsequently, we shall investigate the consequences of the presence of the interaction kernel at the Fokker--Planck level.  In fact, the Fokker--Planck description provides both further insights into the large-time behavior of the solution to the kinetic equation \fer{line}, and a more accessible description of the possible stationary states. 


\section{The pure gambling with non Maxwellian collision kernel}\label{gambling}

The nonlinear kinetic equation of Boltzmann type which describes the evolution of wealth in a pure
gambling process was studied in Ref. \cite{BaTo}. In a pure gambling process \cite{DY00}, the entire sum of wealths of two agents is up for gambling, and randomly shared between the agents. In its original version, the randomness is introduced into the model through a parameter $\omega$ which is a random number drawn
from a probability distribution in $(0, 1)$. In general it is assumed that $\var$ is independent of a pair of agents, so that a pair of agents do not share the same fraction of  wealth  when they interact repeatedly.  Moreover, it is usually assumed that the game is fair, which can be obtained by taking the random variable $\omega$  symmetric with respect to the value $1/2$ (which implies that $\omega$ and $1-\omega$ are identically distributed).  Let $(v,w)$ denote the amounts of wealth played by the agents, and $(v^*,w^*)$ the post-trade wealths. Then the pure gambling is described by the interaction
 \be\label{gamb}
 v^* = \omega(v+w), \quad w^* = (1-\omega)(v+w).
 \ee
Interaction \fer{gamb} is pointwise conservative, namely
 \[
 v^* + w^* = v+w.
 \]
An interesting variant of the pure gambling model was introduced in Ref. \cite{BaTo}. To take into account the role of an external entity (like a bank, or, more in general, the market opportunities), both trading agents have the opportunity to gain (or to loose). This result is achieved by considering a pair of independent and identically distributed random variables, say $\omega_1$ and $\omega_2$, symmetric with respect to the value $1/2$, and to use them to describe the interaction. In this second case the result of the interaction is
 \be\label{gamb2}
 v^* = \omega_1(v+w), \quad w^* = \omega_2(v+w).
 \ee
Note that, unlike the interaction \fer{gamb}, \fer{gamb2} is conservative only in the mean. This means that, while in general 
 \[
 v^* + w^* \not= v+w,
 \]
equality holds in mean sense
 \[
 \langle v^* + w^*\rangle = v+ w.
 \]
For trading rules as in \fer{gamb}, and in presence of the interaction kernel \fer{Ker} the wealth density $f(t,w)$ of the agents system, as shown in Ref. \cite{BaTo},  satisfies a bilinear Boltzmann-like equation that in weak form reads
 \begin{equation}
  \label{line-g}
 \frac{d}{dt}\int_{\R_+}f(t,w)\varphi(w)\,dw  = 
 \kappa\, \Big \langle \int_{\R_+\times \R_+} \left( vw\right)^\delta \left( \varphi(w^*)-\varphi(w) \right) f(t,w)f(t,v)
\,dv\,dw\Big \rangle.
 \end{equation}  
Note that the equation considered in Ref. \cite{BaTo} was of the Maxwellian type, thus corresponding to the choice $\delta =0$. It is a simple exercise to show that the solution to equation \fer{line-g}, for any value of the constant 
$\delta$, satisfies the conservation of mass and momentum.  For the equation with $\delta =0$ the
analytical form of the steady states $f_\infty$ is found for various realizations of the random fraction of the sum which is shared to the agents.  Among
others, Gibbs distribution appears as a steady state in case of a uniformly distributed random fraction, while Gamma distribution appears for a
random fraction which is Beta distributed. It is immediate to verify that, setting
  \[
  g_\infty(w) = w^\delta f_\infty(w)
  \]
 $g_\infty$ solves
  \be\label{equ}
  \Big \langle \int_{\R_+\times \R_+}  \left( \varphi(w^*)-\varphi(w) \right) g_\infty(w)g_\infty(v)
\,dv\,dw\Big \rangle =0.
  \ee 
  Therefore, for any given random variable $\omega$, $g_\infty$ coincides with the stationary solution to the pure gambling Maxwellian model studied in Ref. \cite{BaTo}, corresponding to the same choice of $\omega$. Assume that the steady state is a probability density of unit mean. Hence, if $\omega$ is uniformly distributed in $(0,1)$ \cite{BaTo},  $g_\infty$ is shown to be exponentially (Gibbs) distributed with 
  \[
  g_\infty(w) = e^{-w}.
  \]
 Consequently, the steady state solution of the non Maxwellian model \fer{line-g} of unit mean, for any given $\delta <1$, is given by the Gamma density 
  \be\label{g1}
  f_\infty(w) = \frac{(1-\delta)^{1-\delta}}{\Gamma(1-\delta)} w^{-\delta} \exp\left\{ -(1-\delta)w \right\}.
  \ee
As discussed in Ref. \cite{BaTo},  some insight on the consequences of the choice $\delta >0$ can be gained by the simple computation of the variance of $f_\infty$. It holds
\[
Var(f_\infty)=\int_{\R^+}v^2f_\infty(v) dv -1= \frac{1}{1-\delta}.
\]
Hence the variance (the spreading) increases as $\delta$ increases. In particular, measures of the inequality of the wealth distribution, such as the Gini coefficient, increase for increasing $\delta$ , and tend to blow up as $\delta \to 1$.
 
This behavior is not unexpected. Indeed, in the original pure gambling model, corresponding to $\delta =0$, also agents with no wealth can play, and gain wealth from the gambling, thus moving away from their unlucky condition. Clearly, this is  a purely abstract model. If we want to adapt the model to the human behavior, we are forced to eliminate this non realistic gambling. It is clear that agents with positive wealth  would never accept to trade with agents with no wealth (or an extremely small wealth), knowing in advance that the only possibility for them is to be looser after gambling. This clearly implies that the percentage of agents with wealth close to zero can only increase with $\delta$ increasing, since they are automatically excluded from the game by the presence of the collision kernel  $K$.  The same consequences are shown to hold for any other choice of the random variable $\omega$ considered in Ref. \cite{BaTo} and leading to an explicit equilibrium density. 

The situation described by an interaction of type \fer{gamb2} is different. As discussed in Ref. \cite{BaTo},  the case in which the gambling game is only \emph{conservative-in-the-mean}  was shown to lead to an explicit heavy tailed inverse Gamma distribution. Following Ref. \cite{BaTo}, let us consider the pair of random variables $\omega_i$, $i =1,2$ given by
 \[
 \omega_i = \frac 1{4\vartheta_i} , \qquad i =1,2,
 \]
where, for a given $a > 1$ the random variable $\vartheta_i$, $i =1,2$  is a $Beta(a+1/2,a- 1/2)$ random variable.
Then, the solution to equation \fer{equ} of unit mean is an  inverse-Gamma distribution  of shape parameter $a$ and scale parameter $a - 1$, that is
 \be\label{gam}
g_\infty(w) = \frac{(a-1)^a}{\Gamma(a)}\frac{ \exp\left\{ -\frac{ a-1}w\right\}}{w^{1+a}},
\ee
which is peaked around the mean value 1 and has heavy tails, in that it decays at infinity like $w^{a+1}$. Consequently, in presence of the collision kernel $K$ the steady state changes into 
 \be\label{pp}
 f_\infty(w) = \frac{(a+\delta-1)^{a+\delta}}{\Gamma(a+\delta)}\frac{ \exp\left\{ -\frac{ a+\delta-1}w\right\}}{w^{1+a+\delta}},
  \ee
 It follows that $f_\infty$ has unit mean and, provided $a+\delta >2$ its variance has the value
  \[
  Var(f_\infty)= \frac{1}{a+\delta -2}.
  \]
  Hence, in contrast with the result of the first gambling model, here the variance of the steady state density is decreasing as $\delta$ increases, thus leading to a fairer society in presence of a larger value of the parameter $\delta$.   
 To summarize, the addiction of a variable collision kernel of type \fer{Ker} in the pure gambling model, for which exact solutions are available, enlightens a marked difference between the two interactions \fer{gamb}  and \fer{gamb2}.  In the first case, the presence of the kernel leads to a variance that increases with $\delta$. In this case, the presence of the kernel enhances its effect by inducing a strong variation of the population with wealth close to zero. In the second case, the presence of the kernel enhances its effects also on agents trading big amounts of wealth, by inducing a reduction of the number of very rich agents, which leads to a higher value of the Pareto index.

\section{Fokker-Planck description of the non-Maxwellian model}\label{quasi}

In this Section, we illustrate the main steps leading from equation \fer{line} to its Fokker--Planck limit. 
The relationship between the kinetic equation \fer{line} and its Fokker--Planck counterpart is obtained by resorting to the well-known \emph{grazing} asymptotic.
As exhaustively explained in Ref. \cite{FPTT17}, this asymptotic procedure is a well-consolidated technique which has been first developed for the classical Boltzmann equation \cite{Vi,vil2,Vil02}, where it is known under the name of \emph{grazing collision limit}. In the one-dimensional setting, this asymptotic procedure has been fruitfully applied to  the dissipative versions of Kac caricature of a Maxwell gas \cite{Furioli2012}, introduced in Ref.~ \cite{PTo}. 

Since this procedure is fully described in details in Ref. \cite{FPTT17}, we give below details only when there are marked differences with respect to the derivation for Maxwellian models. First of all, to avoid inessential difficulties, we will assume that the market density $\mathcal E$ has a certain number of moments bounded, more precisely
 \be\label{mo3}
 M_\alpha = \int_{\R_+} v^\alpha \mathcal E(v)\, dv < +\infty, \quad 0 \le \alpha \le 4.
 \ee
Among observable quantities,  by letting $\varphi=1$ in \fer{line} one shows that the mass is  conserved. Therefore, if the initial density is of unit mass, $f(t,w)$ remains a probability density at each subsequent time. Besides the mass,  the first representative moments to be studied are the mean value of the density $f(t,w)$, as well as its variance. In what follows, let
 \be\label{momi}
 m_\alpha(t) = \int_{\R_+}w^\alpha \,f(t,w)\,dw, \quad \alpha >0.
 \ee
By choosing $\varphi(w) = w$ in \fer{line} and remarking that \fer{lin} implies
 \[
 \langle w^* - w \rangle = \lambda(v-w) ,
 \]
we obtain 
 \begin{equation}
  \label{m-1}
 \frac{d}{dt}m_1(t)  = 
 \kappa \int_{\R_+ \times \R_+}(vw)^\delta\lambda(v-w)\,  f(t,w)\mathcal E(v)
\,dv\,dw.
 \end{equation}
 While the evolution of the mean value is not in closed form, it can be easily proven that the mean value remains bounded in time if it is so initially. Indeed,  the integral on the right-hand side is uniformly bounded in time from above. This follows from the inequality
 \be\label{222}
(vw)^\delta(v-w) \le v^{1+2\delta}.
 \ee
Inequality \fer{222} clearly holds if $v=0$ or $w= 0$. If $v>0$ and $w >0$,  \fer{222} is equivalent to the obvious inequality
 \[
 \left(\frac vw\right)^{1+\delta} +1 \ge  \frac vw. 
 \]
Using \fer{222} we obtain the upper bound
 \[
 \int_{\R_+ \times \R_+}(vw)^\delta(v-w)\,  f(t,w)\mathcal E(v)\,dv\,dw \le \int_{\R_+ \times \R_+}v^{1+2\delta}\,  f(t,w)\mathcal E(v)\,dv\,dw = M_{1+2\delta},
 \]
which implies, for any $t >0$
 \[
 m_1(t) \le m_1(0) + \kappa\lambda M_{1+2\delta}\, t.
 \]
A better estimate can be obtained by resorting to
 Jensen's inequality. Since
  \[
  \int_{\R_+ \times \R_+}(vw)^\delta(v-w)\,  f(t,w)\mathcal E(v)\,dv\,dw= - \int_{\R_+}\left( w^{1+\delta}M_\delta - w^\delta M_{1+\delta}\right)\,  f(t,w)\,dw,
  \]
 by using
  \[
 \left(\int_{\R_+}w \,f(t,w)\,dw\right)^{1+\delta} \le \int_{\R_+}w^{1+\delta} \,f(t,w)\,dw,
 \]
 and
\[
 \left(\int_{\R_+}w^\delta \,f(t,w)\,dw\right)^{1/\delta} \le \int_{\R_+}w \,f(t,w)\,dw,
 \]
 we obtain
  \begin{equations}
  \label{m33}
 \frac{d}{dt}m_1(t) &\le \kappa\,\lambda\left(m_1(t)^\delta M_{1+\delta} - m_1(t)^{1+\delta} M_\delta \right)\\
& = \kappa\,\lambda\, m_1(t)^\delta \left(M_{1+\delta} - m_1(t) M_\delta \right).
 \end{equations}
 Clearly, \fer{m33} implies the bound
  \be\label{m-bound}
  m_1(t) \le \max\left\{ m_1(0); \frac{M_{1+\delta}}{M_\delta}\right\},
  \ee
 namely the uniform boundedness of the mean value. Likewise, since
 \[
 \langle {w^*}^2 - w^2 \rangle = (\sigma +\lambda^2-2\lambda) w^2 +2\lambda(1-\lambda)vw  +\lambda^2 v^2,
 \]
 we obtain
 \begin{equations}
  \label{m-v}
 &\frac{d}{dt}m_2(t) =\\
 & 
 \kappa \int_{\R_+ \times \R_+}(vw)^\delta\left[(\sigma +\lambda^2-2\lambda) w^2 +2\lambda(1-\lambda)vw  +\lambda^2 v^2\right] \,  f(t,w)\mathcal E(v)
\,dv\,dw .
 \end{equations} 
 Hence, if the constants $\sigma$ and $\lambda$ satisfy
  \be\label{m-2}
  \sigma +\lambda^2< 2\lambda, 
  \ee
so that the coefficient of $w^2$ into the integral in \fer{m-v} is negative, proceeding as before with Jensen's inequality we conclude that the second moment remains bounded if it is so initially, and its time derivative satisfies the inequality
 \begin{equations}
  \label{m34}
 &\frac{d}{dt}m_2(t) \le \\
 &\kappa\, m_2(t)^{\delta/2} \left(-(2\lambda-\sigma -\lambda^2) M_\delta  m_2(t) + M_{2+\delta} + 2\lambda(1-\lambda)M_{1+\delta}m_2(t)^{1/2} \right).
 \end{equations}
 Since the right-hand side of inequality \fer{m34} is a second order equation in the unknown $m_2(t)^{1/2}$, and the coefficient of the square is negative, it follows that positivity of the right-hand side holds if and only if $m_2(t)$ does not cross the bounded value
 \[
\bar m_2 = \left\{\frac{\lambda(1-\lambda)M_{1+\delta} + \sqrt{(\lambda(1-\lambda)M_{1+\delta})^2 + M_\delta M_{2+\delta}(2\lambda -\sigma-\lambda^2)}}{M_\delta(2\lambda -\sigma-\lambda^2)} \right\}^2.
 \]
 Finally, if condition \fer{m-2} holds, we get
 \be\label{m2-bound}
  m_2(t) \le \max\left\{ m_2(0); \bar m_2\right\}.
  \ee
 Note that the boundedness of the moments of the distribution $\mathcal E$ is enough to guarantee that the moments at the first two orders of the solution to equation \fer{line-w} remain bounded at any time $t >0$, provided that they are bounded initially.

The previous argument can be used to prove, at the price of an increasing number of computations,  that moments of order $n\ge2$ are bounded, provided that they are bounded initially, any time the coefficient of the higher order term in the wealth variable $w$ in the expression
$\langle (w^*)^n - w^n \rangle $ is negative. Since the coefficient is equal to
 \[
 \langle (1-\lambda+ \eta)^n \rangle -1, 
 \]
this condition establishes a relationship between $\lambda$ and the moments of the random variable $\eta$. In the rest of this Section, we choose $\lambda$ and $\eta$ in such a way that the coefficient is negative for $n =3$, which implies that the moments of the solution to the kinetic equation \fer{line} are uniformly bounded up to the order three.
 
Let us suppose now that the interaction \fer{lin} produces a very small mean change of the wealth. This can be easily achieved by introducing the scaling
 \be\label{scal}
\lambda \to \e \,\lambda,\quad \eta \to \sqrt\e \, \eta,
 \ee
where $\e$ is a small parameter, $\e \ll 1$. Note that the scaling has been chosen to maintain the relationship between $\lambda$ and $\sigma$ as given by inequality  \fer{m-2}. This scaling will produce a small variation of the mean value \fer{m-1}
\[
\frac{d}{dt}m_1(t)  = 
 \e \, \kappa \int_{\R_+ \times \R_+}(vw)^\delta\lambda(v-w)\,  f(t,w)\mathcal E(v)
\,dv\,dw.  
 \]
To observe an evolution of the mean value independent of $\e,$we  resort to a scaling of time. Letting  $t \to  \e t$,  the evolution of the average value satisfies
\[
 \frac{d}{dt}m_1(t)  = 
 \kappa \int_{\R_+ \times \R_+}(vw)^\delta\lambda(v-w)\,  f(t,w)\mathcal E(v)
\,dv\,dw
 \]
namely the same evolution law for the average value of $f$  given by \fer{m-1}. Indeed, if we assume that the interactions are scaled to produce a very small change of wealth, to observe an evolution of the mean value independent of the smallness, we need to wait enough time to restore the original evolution. 

With this scaling, condition \fer{m-2} becomes
  \be\label{scala}
 \e\, \sigma +\e^2\lambda^2< 2\e\,\lambda.
  \ee
 Hence, \fer{m-v} takes the form
 \begin{equations}
  \label{msca}
 &\frac{d}{dt}m_2(t) \\
 & 
= \e\, \kappa \int_{\R_+ \times \R_+}(vw)^\delta\left[(\sigma +\e\,\lambda^2-2\lambda) w^2 +2\lambda(1-\e\lambda)vw  +\e \,\lambda^2 v^2\right] \,  f(t,w)\mathcal E(v)
\,dv\,dw \\
&= \e\, \kappa \int_{\R_+ \times \R_+}(vw)^\delta\left[(\sigma -2\lambda) w^2 +2\lambda vw \right] \,  f(t,w)\mathcal E(v)
\,dv\,dw +  R_\e(t).
 \end{equations}
In \fer{msca} the remainder term is
 \be\label{rem}
 R_\e(t)= \e^2\, \kappa \int_{\R_+ \times \R_+}(vw)^\delta \lambda^2(v-w)^2 \,  f(t,w)\mathcal E(v)
\,dv\,dw.
 \ee
After the time scaling, the second order moments satisfies the equation
 \[
\frac{d}{dt}m_2(t)
=  \kappa \int_{\R_+ \times \R_+}(vw)^\delta\left[(\sigma -2\lambda) w^2 +2\lambda vw \right] \,  f(t,w)\mathcal E(v)
\,dv\,dw +  \frac 1\e R_\e(t).
 \]
Since the remainder vanishes at the order $\e^2$ as $\e \to 0$, one obtains in the limit a closed form for the evolution of the second moment. 

The final step is to consider, under the simultaneous scaling of wealth (given by \fer{scal}) and time, the evolution of a general observable.
Given a smooth function $\varphi(w)$, let us expand in Taylor series $\varphi(w^*)$ around $\varphi(w)$. It holds
 \[
\langle w^* -w \rangle = \e \,\lambda(v-w); \quad  \langle (w^* -w)^2\rangle =  \e^2 \,\lambda^2(v-w)^2 + \e \sigma w^2.
 \]
Therefore, in terms of powers of $\e$,  we easily obtain the expression
 \[
\langle \varphi(w^*) -\varphi(w) \rangle =  \e \left( \varphi'(w)\lambda(v-w) + \frac 12 \, \varphi''(w)\, \sigma w^2 \right) + R_\e (v,w),
 \]
where the remainder term $R_\e$  vanishes at the order $\e^{3/2}$ as $\e \to 0$ \cite{FPTT17} 

Substitution into equation \fer{line} and scaling in time  give
\begin{equations}\label{fp}
&\frac{d}{dt}\int_{\R_+}f(t,w)\varphi(w)\,dw \\
& = 
 \kappa\,  \int_{\R_+\times \R_+} \left( vw\right)^\delta \left( \varphi'(w)\lambda(v-w) + \frac 12 \, \varphi''(w)\, \sigma w^2  \right) f(t,w)\mathcal E(v)
\,dv\,dw \\
&+\frac  \kappa \e\,\int_{\R_+\times \R_+} \left( vw\right)^\delta R_\e (v,w)f(t,w)\mathcal E(v)
\,dv\,dw.
 \end{equations}
Letting $\e \to 0$, and evaluating the integrals with respect to the market density $\mathcal E(v)$, shows that in consequence of the scaling \fer{scal} the weak form of the kinetic model \fer{line} is well approximated by the weak form of a linear Fokker--Planck equation (with variable coefficients)
\begin{equations}
  \label{m-13}
 & \frac{d}{dt}\int_{\R_+}\varphi(w) \,f(t,w)\,dw = \\
  & \kappa \int_{\R_+} \left( \varphi'(w)\,\lambda\, w^\delta(M_{1+\delta} - M_\delta w) + \frac 12 \varphi''(w) \sigma\, M_\delta w^{2+\delta} \right) f(t,w)\, dw. 
 \end{equations}
By choosing $\varphi(w) = 1$ into \fer{m-13} we show that the mass density is preserved in time, so that, for any given time $t >0$
 \be\label{mass1}
 \int_{\R_+} \,f(t,w)\,dw =  \int_{\R_+} \,f(t,w=0)\,dw .
 \ee
Therefore, if the initial value is given by a probability density function, the (possible) solutions of the Fokker--Planck equation \fer{m-13} remain  probability densities for all subsequent times.

If boundary conditions on $w=0$ and $w=+\infty$ are added, such that the boundary terms produced by the integration by parts vanish, equation \fer{m-13} coincides with the weak form of the Fokker--Planck equation
 \begin{equation}\label{FP22}
 \frac{\partial f}{\partial t} = \kappa\, M_\delta\left[ \frac\sigma 2 \frac{\partial^2 }{\partial w^2}
 \left(w^{2+\delta}f \right )+ \lambda\,
 \frac{\partial}{\partial w}\left( w^\delta(w - M_{1+\delta}/M_\delta ) f\right)\right].
 \end{equation}
Without loss of generality, we will simplify equation \fer{FP22} by assuming
 \be\label{simp}
 \frac{\kappa\, \sigma \, M_\delta}2 = 1, \quad \frac{M_{1+\delta}}{M_\delta} = m, \quad \frac{2\lambda}\sigma = \mu.
 \ee
Thus, the resulting Fokker--Planck equation takes the form
\begin{equation}\label{FP2}
 \frac{\partial f}{\partial t} =  \frac{\partial^2 }{\partial w^2}
 \left(w^{2+\delta}f\right )+ 
\mu\, \frac{\partial}{\partial w}\left( w^\delta(w - m ) f\right).
 \end{equation}
As exhaustively discussed in Ref. \cite{FPTT17} (cf. also the analysis of Section \ref{large-time}),  the right boundary conditions that guarantee mass conservation are the so-called \emph{no--flux} boundary conditions, given by 
\be\label{bc1}
\left. \frac{\partial }{\partial w}\left( w^{2+\delta} f(t,w)\right) + \mu \, w^\delta (w-m) f(t,w) \right|_{w=0,+\infty} = 0, \quad t>0.
\ee
\begin{rem} {\rm It is interesting to remark that the presence of the collision kernel in the Boltzmann equation \fer{line} results in a modification of both the diffusion and the drift terms in the Fokker--Planck equation. These modifications cancel by choosing $\delta = 0$, that corresponds to the Maxwellian case studied in Ref. \cite{CoPaTo05}}.
\end{rem}

 With respect to the Maxwellian case, the presence of $\delta >0$ in \fer{FP2} does not modify the shape of the equilibrium density, that can be easily recovered by solving the first-order differential equation 
 \be\label{ste}
 \frac{\partial }{\partial w}
 \left(w^{2+\delta}f\right ) = 
-\mu\,\left( w^\delta(w - m ) f\right).
 \ee
Using  $g(w) = w^{2+\delta}f(w)$ in \fer{ste} as unknown function, shows that the unique equilibrium density of unit mass is the inverse Gamma function
 \be\label{new-eq}
 f_\infty^\delta (w) = \frac{(\mu\, m)^{1+\delta+ \mu}}{\Gamma(1+\delta + \mu)}\frac{\exp\left(-\frac{\mu\, m}{w}\right)}{w^{2+\delta +\mu}}.
 \ee
 Hence, the presence of the collision kernel is such that the Pareto index in the equilibrium density of the target Fokker--Planck equation is an inverse Gamma density with the tail exponent increased by the amount $\delta$. This difference is in agreement with the result found in Section \ref{gambling} for the pure gambling model with interactions of type \fer{gamb2} and a non-Maxwellian collision kernel.  By discarding economic interactions involving agents with very small wealth, and enhancing interactions between very rich agents we surprisingly generate a less unequal distribution of wealth in the society. Indeed, provided $\delta + \mu >1$, the variance of the inverse Gamma density \fer{new-eq} is equal to
 \[ 
 Var\left( f_\infty^\delta \right) = \frac{(\mu m)^2}{(\delta+\mu)^2(\delta + \mu -1)},
 \]
so that the variance decreases as $\delta$ increases. 

\begin{rem}\label{mu-b}
{\rm  In the rest of the paper, we will always suppose that the value of the constant $\mu$ appearing in front of the drift term in the Fokker--Planck equation \fer{FP2} is bigger than one. In this case, for all values of $\delta >0$, the variance of the stationary solution \fer{new-eq} is a bounded quantity. In particular, since $\delta \le 1$
 \[
 \lim_{w \to +\infty} w^{2+\delta} f_\infty^\delta(w) = 0.
 \]
}
\end{rem}

The existence of the unique equilibrium density of unit mass, given by  \fer{new-eq}, allows us to write the Fokker--Planck equation \fer{FP2} in other equivalent formulations. 
Then, since $f_\infty^\delta$ solves \fer{ste},
 we can write
  \[
\frac{\partial }{\partial w}\left(w^{2+\delta}
 f \right) + \mu w^\delta(w -m)\,f =   w^{2+\delta}f \left( \frac{\partial }{\partial w} \log\left( w^{2+\delta}
 f \right)  + \mu\,\frac{w-m}{w^2}\right)=
 \]
 \[
 w^{2+\delta} f \left( \frac{\partial }{\partial w} \log\left( w^{2+\delta}
 f \right)- \frac{\partial }{\partial w} \log\left( w^{2+\delta}
 f_\infty^\delta \right)\right) =  
  \]
  \[
 w^{2+\delta} f  \frac{\partial }{\partial w} \log\frac f{f_\infty^\delta}=  w^{2+\delta}f_\infty^\delta \frac{\partial }{\partial w}\frac f{f_\infty^\delta}.
 \]
Hence, we can write the Fokker--Planck equation \fer{FP2} in the equivalent form
 \be\label{FPalt}
  \frac{\partial f}{\partial t} = \frac{\partial }{\partial w}\left[  w^{2+\delta}f \frac{\partial }{\partial w} \log\frac f{f_\infty^\delta}\right],
 \ee
which enlightens the role of the logarithm of the quotient $f/f_\infty^\delta$, or in the form
 \be\label{FPal2}
  \frac{\partial f}{\partial t} = \frac{\partial }{\partial w}\left[ w^{2+\delta} f_\infty^\delta \frac{\partial }{\partial w} \frac f{f_\infty^\delta}\right].
 \ee 
In particular,   equation \fer{FPal2}  allows us to obtain the evolution equation for the quotient $F= f/f_\infty^\delta$. Indeed
 \[
  \frac{\partial f}{\partial t} = f_\infty^\delta  \frac{\partial F}{\partial t} = w^{2+\delta} f_\infty^\delta \frac{\partial^2 }{\partial w^2} \frac f{f_\infty^\delta} +\frac{\partial }{\partial w} ( w^{2+\delta}f_\infty^\delta) \frac{\partial }{\partial w} \frac f{f_\infty^\delta}= 
 \]
 \[
 w^{2+\delta} f_\infty^\delta \frac{\partial^2 F }{\partial w^2}  -\mu w^\delta(w-m) f_\infty^\delta \frac{\partial F}{\partial w},  
 \]
which shows that $F= F(t,w)$ satisfies the equation
 \be\label{quo}
\frac{\partial F}{\partial t} =   w^{2+\delta} \frac{\partial^2 F }{\partial w^2}  -\mu w^\delta(w-m) \frac{\partial F}{\partial w}.
 \ee
 Equation \fer{quo} is usually known as the \emph{adjoint} of \fer{FP2}.
 
 The boundary conditions of  the adjoint form of the Fokker-Planck equation then follow from \fer{bc1}  and read
\be\label{bc2} 
\left.  w^{2+\delta} f_\infty^\delta(w) \frac{\partial }{\partial w} \frac {f(t,w)}{f_\infty^\delta(w)} \right |_{w=0, +\infty} = \left.  w^{2+\delta} f_\infty^\delta(w) \frac{\partial F(t,w) }{\partial w} \right |_{w=0,+\infty} =0.
\ee 
In consequence of the assumption made on $\mu$ (cf. Remark \ref{mu-b}) the boundary conditions \fer{bc2} are satisfied provided the derivative  $\frac{\partial F(t,w) }{\partial w}$ is bounded.

\section{Large-time behavior of the Fokker--Planck equation} \label{large-time}

In Section \ref{quasi}, we introduced the kinetic model \fer{line}  of Boltzmann type. While this model was described at the microscopic level by the binary trading interactions among agents proposed in Ref. \cite{CoPaTo05}, it was additionally coupled with a non Maxwellian kernel. In this way,  the model has the further property to select or discard interactions in accord with elementary economic principles. Consequently, it is reasonable to assume that it provides a better approximation to the relaxation process of the wealth distribution of a multi-agent society towards equilibrium. 
Similarly to the situation studied in Ref. \cite{CoPaTo05}, the non-Maxwellian  kernel of the Boltzmann type kinetic model \fer{line}  leads, in the grazing collision limit, to a linear Fokker--Planck equation \fer{FP2}. While maintaining the shape of the equilibrium density of the Maxwellian case, this equation is characterized by the presence of a further power $w^\delta$, with $\delta >0$,  in both the diffusion and drift coefficients. 

The main advantage of the Fokker--Planck description is related to the possibility to express analytically the steady state, and to resort to various mathematical methods to analyze the rate of convergence to equilibrium of its solution.
In  a related paper \cite{FPTT17}, we enlightened the main difficulties encountered in order to study rates of convergence towards equilibrium of  the solution to Fokker--Planck type equations with variable coefficients of diffusion and linear drift by using entropy methods. On the other hand, these methods appear very natural to apply, since they allow for precise results if the classical Fokker--Planck equation with constant coefficient of diffusion and linear drift is dealt with. 

Among the various models considered in Ref. \cite{FPTT17}, equation \fer{FP2} with $\delta =0$ was included as a leading example. The presence of a  diffusion term with variable diffusion coefficient led to the conclusion that in general the entropy methods fail to give exhaustive results. Related findings in this direction, directly connected to the differential inequalities which are classically used to control convergence towards equilibrium, were obtained before in Ref. \cite{MJT}. 

In this section, we will show that the modification produced in the Fokker--Planck equation by the choice  of a non-Maxwellian kernel,  in agreement with the economic behavior of agents,  allows for a fundamental improvement in the large-time behavior of the solution, that can be shown to converge exponentially fast towards equilibrium in relative entropy at explicit rate. 


\subsection{Existence results}

 Fokker-Planck equation \fer{FP2} is included in the class  of one-dimensional Fokker--Planck equations that can be fruitfully written in divergence form as
 \be\label{FFPP}
  \frac{\partial f(t,x)}{\partial t} = \frac{\partial }{\partial x}\left[ \frac{\partial }{\partial x}\left(a(x)
 f(t,x)\right) + b(x) \,f(t,x)\right],
 \ee
where $x\geq 0$ and the diffusion coefficient $a(x)$ is a nonnegative function, strictly positive on the interior of the domain.

The initial-boundary value problem for equation \fer{FFPP} in  $\R_+$ when $a(x) = \alpha x$ and $b(x) = -(\beta x +\gamma)$, with $\alpha, \beta$ and $\gamma$ constants, has been first studied by Feller in Ref. \cite{Fel1} at the beginning of the fifties of last century, resorting to the powerful tool of semigroup theory. In a second seminal paper \cite{Fel2}, written one year later, Feller extended his results to a larger class of diffusion  and drift  coefficients. Equation \fer{FFPP} was complemented in Ref. \cite{Fel2} with different types of boundary conditions, leading to different results of existence and uniqueness. The analysis of Feller took into account both equation \fer{FFPP} and its adjoint equation
\be\label{FPad}
  \frac{\partial F(t,x)}{\partial t} = a(x) \frac{\partial^2 F(t,x)}{\partial x^2}
 - b(x) \, \frac{\partial F(t,x) }{\partial x}.
 \ee
In Ref. \cite{Fel2},  existence and uniqueness of the solution to equations \fer{FFPP}, \fer{FPad}, were studied by assuming that $a(x), a'(x) $ and $b(x)$ were continuous, but not necessarily bounded,  in the interior of the domain, where $a(x) >0$. Further, the boundaries of the domain were classified by looking at the integrability properties of the function
 \be\label{psi}
 \Psi(x) = \exp \left\{  \int_{x_0}^x b(y) a^{-1}(y)\,dy \right\},
 \ee
 where $0 < x_0 <\infty$. 
Feller's analysis applies to the Fokker--Planck equation \fer{FP2}, where   $a(x)= x^{2+\delta}$, and $b(x) = \mu x^\delta(x-m)$.
Moreover, since the stationary solution \fer{new-eq} is directly related to \fer{psi}, and
 \[
 f_\infty^\delta(x) = C a^{-1}(x) \Psi^{-1}(x),
 \]
it is possible  to conclude that, in the language of Ref. \cite{Fel2},  $x=0$ is an entrance boundary for $0<\delta<1$ and a natural one for $\delta =1$, while  $x = +\infty$ is always an entrance boundary (cf. the discussion of Sec. 23 of Ref. \cite{Fel2}). 

In this situation, by applying the results of Ref. \cite{Fel2}, we obtain the following existence and uniqueness result. 

\begin{thm}[Feller  \cite{Fel2}] \label{Feller}
Let us consider the  initial value problem
\begin{equation}\label{bp-FP2}
\left\{
\begin{aligned}
& \frac{\partial f}{\partial t} =  \frac{\partial^2 }{\partial w^2}
 \left(w^{2+\delta}f\right )+  \mu\, \frac{\partial}{\partial w}\left( w^\delta(w - m ) f\right),\quad t>0,\ w\in (0,+\infty)\\
 & f(0,w)= f_0(w)
 \end{aligned}
 \right .
\end{equation}
where $\delta \in (0,1]$, $\mu$, $m$ are positive constants, and $f_0$ is a probability density in $\R_+$. Let moreover $f_0 \in \Sigma$, where
\[
\Sigma =\left\{
\begin{aligned}
& \phi \in C([0,+\infty))\cap C^2((0,+\infty)) : \\
&\lim_{w\to 0^+} \frac{\phi(w)}{f_\infty^\delta(w)} \in \R_+ \\
&\lim_{w\to +\infty} \frac{\phi(w)}{f_\infty^\delta(w)} \in \R_+ \\
& \lim_{w\to 0^+} \left( \frac{\partial}{\partial w}  \left(w^{2+\delta}\phi \right )+  \mu\,\left( w^\delta(w - m ) \phi \right ) \right )= 0\\
 & \lim_{w\to +\infty} \left( \frac{\partial}{\partial w}  \left(w^{2+\delta}\phi\right )+  \mu\,\left( w^\delta(w - m ) \phi \right )\right ) = 0
 \end{aligned}
 \right \}.
 \]
 Then there exists a positive solution $f(t,w)$ of  problem \eqref{bp-FP2}, which is unique  in 
 $C([0,+\infty), \Sigma)  \cap C^1([0,+\infty), L^1(\R_+))$.
 Moreover, $f(t,w)$ remains a probability density for all $t>0$.
 \end{thm}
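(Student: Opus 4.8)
The strategy is to recognize that \fer{bp-FP2} is exactly the instance of the one–dimensional divergence–form equation \fer{FFPP} (paired with its adjoint \fer{FPad}) obtained by taking $a(w)=w^{2+\delta}$ and $b(w)=\mu\,w^{\delta}(w-m)$. On the open half–line $(0,+\infty)$ the functions $a,a',b$ are smooth and $a>0$, so the regularity hypotheses of Feller's analysis in Ref.~\cite{Fel2} are met, and the whole argument reduces to three steps: (i) computing Feller's function $\Psi$ of \fer{psi}; (ii) classifying the two boundary points $w=0$ and $w=+\infty$; and (iii) invoking the existence–uniqueness statement Feller attaches to that boundary configuration, after which the only remaining task is to identify his solution space with $\Sigma$.

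For step (i) I would note that $b(y)a^{-1}(y)=\mu\,y^{\delta}(y-m)/y^{2+\delta}=\mu/y-\mu m/y^{2}$, independently of $\delta$, so that $\Psi(w)=C\,w^{\mu}\exp(\mu m/w)$ for a suitable $C>0$; then $a^{-1}(w)\Psi^{-1}(w)$ is proportional to $w^{-(2+\delta+\mu)}\exp(-\mu m/w)$, which is precisely, up to normalization, the density $f_\infty^\delta$ of \fer{new-eq}, confirming the relation $f_\infty^\delta=C'a^{-1}\Psi^{-1}$ used in the discussion. For step (ii) the classification is read off from the convergence of $\int\Psi$, of $\int a^{-1}\Psi^{-1}$, and of Feller's two iterated integrals near each endpoint. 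Near $w=+\infty$ one has $\Psi(w)\sim C\,w^{\mu}$ and $a^{-1}(w)\Psi^{-1}(w)\sim C^{-1}w^{-(2+\delta+\mu)}$, so $\int^{\infty}\Psi=+\infty$ while $\int^{\infty}a^{-1}\Psi^{-1}<\infty$; one of the iterated integrals behaves like $\int^{\infty}w^{-(1+\delta)}\,dw<\infty$ (here $\delta>0$ enters) and the other diverges, so $+\infty$ is an entrance boundary for every $\delta\in(0,1]$ (the standing assumption $\mu>1$ of Remark \ref{mu-b} keeps all exponents comfortably in range). Near $w=0$, $\Psi$ blows up exponentially so $\int_{0}\Psi=+\infty$, while $a^{-1}\Psi^{-1}\to0$ exponentially so $\int_{0}a^{-1}\Psi^{-1}<\infty$; a Watson–type estimate gives $\int_{w}^{l}\Psi(y)\,dy\sim \tfrac{C}{\mu m}\,w^{2+\mu}\exp(\mu m/w)$ as $w\to0^{+}$, whence the relevant iterated Feller integral behaves like $\int_{0}w^{-\delta}\,dw$, which converges for $0<\delta<1$ and diverges logarithmically for $\delta=1$. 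Thus $w=0$ is an entrance boundary when $\delta\in(0,1)$ and a natural one when $\delta=1$, as asserted (cf.\ Sec.~23 of Ref.~\cite{Fel2}).

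Finally, for step (iii): with both endpoints of entrance type (or entrance/natural when $\delta=1$), Feller's theory furnishes a unique, strongly continuous, mass–preserving solution of the forward problem, and I would translate his abstract statement into membership in $C([0,+\infty),\Sigma)\cap C^{1}([0,+\infty),L^{1}(\R_+))$. The three groups of conditions defining $\Sigma$ are exactly the analytic fingerprints of the boundary classification: finiteness of $\lim_{w\to0^{+}}\phi(w)/f_\infty^\delta(w)$ and of $\lim_{w\to+\infty}\phi(w)/f_\infty^\delta(w)$ expresses that near an entrance boundary an admissible density is comparable to the stationary one, while the vanishing of the flux $\partial_{w}\bigl(w^{2+\delta}\phi\bigr)+\mu w^{\delta}(w-m)\phi$ at $0$ and at $+\infty$ is the no–flux condition \fer{bc1}; taking $\varphi\equiv1$ in the weak formulation then gives \fer{mass1}, so $f(t,\cdot)$ stays a probability density for all $t>0$.

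\textbf{Main obstacle.} I expect the genuinely delicate point to be this last translation — matching Feller's semigroup–theoretic solution and the domain of its generator to the explicit space $\Sigma$, and checking that in the borderline case $\delta=1$, where $w=0$ degenerates to a natural boundary, uniqueness still holds with the flux condition at $0$ satisfied automatically rather than imposed — rather than the (routine) asymptotic estimates underlying the boundary classification of step (ii).
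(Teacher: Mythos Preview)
Your proposal is essentially correct and hits the same landmarks as the paper's argument: the identification $a(w)=w^{2+\delta}$, $b(w)=\mu w^{\delta}(w-m)$, the computation of $\Psi$ and the relation $f_\infty^\delta=C\,a^{-1}\Psi^{-1}$, the boundary classification (entrance at $+\infty$; entrance at $0$ for $\delta<1$, natural for $\delta=1$), and the appeal to Feller's semigroup theory. The one genuine methodological difference is in your step~(iii), precisely at the point you flag as the main obstacle.

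The paper does not try to match Feller's generator domain directly to $\Sigma$ for the forward operator $\Omega^{\star}$. Instead it passes to the adjoint: setting $F=f/f_\infty^\delta$, the initial value $F_0=f_0/f_\infty^\delta$ lies in a space $\widetilde\Sigma$ whose conditions --- boundedness of $F_0$ at both endpoints and vanishing of $w^{2+\delta}f_\infty^\delta\,\partial_w F_0$ at $0$ and $+\infty$ --- are immediately read off from the conditions defining $\Sigma$ via the identity preceding \fer{FPalt}. Feller's analysis then places $F_0$ in the domain of the adjoint generator $\Omega=w^{2+\delta}\partial_w^2-\mu w^{\delta}(w-m)\partial_w$, and Hille--Yosida yields a unique positive solution $F(t)\in C([0,\infty),\widetilde\Sigma)\cap C^1([0,\infty),C_b)$; one then simply sets $f(t,w)=F(t,w)f_\infty^\delta(w)$ and reads off the claimed regularity and uniqueness for $f$. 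Mass conservation follows either from Feller or directly from the no-flux condition encoded in $\widetilde\Sigma$.

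What the adjoint detour buys is exactly a clean resolution of your obstacle: the quotient conditions $\lim\phi/f_\infty^\delta\in\R_+$ become boundedness of $F_0$, and the flux condition becomes vanishing of $w^{2+\delta}f_\infty^\delta\,\partial_w F_0$, both of which sit naturally in Feller's description of $\mathrm{dom}(\Omega)$ for entrance/natural boundaries. Your direct route is not wrong, but you would end up rediscovering this change of unknown in order to carry out the domain identification concretely.
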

 \begin{rem}
Note that the no--flux boundary conditions are contained in the definition of the space $\Sigma$ itself.
 \end{rem}
 \begin{proof}
 We make use of the relationship between a solution $f(t)$ of the Fokker--Planck equation \eqref{FP2} and a solution $F(t)$ of the adjoint equation \eqref{quo}.
Let $F_0= f_0/f_\infty^\delta$ the initial data for the initial value problem for equation \eqref{quo}. 
Then, the assumptions on $f_0$ translate into assumptions on $F_0$. 

Hence, $F_0 \in \widetilde \Sigma$ where
 \[
 \widetilde \Sigma =\left\{ \psi \in C([0,+\infty))\cap C^2((0,+\infty)) : 
 \begin{aligned}
&\lim_{w\to +\infty}  \psi(w) \in \R_+ \\
& \lim_{w\to 0^+}  \left(w^{2+\delta}f_\infty^\delta(w)\frac {\partial \psi(w)}{\partial w} \right )= 0\\
 & \lim_{w\to +\infty} \left( w^{2+\delta}f_\infty^\delta(w) \frac {\partial \psi(w)}{\partial w} \right ) = 0
 \end{aligned}
  \right \}.
 \]
 Feller's analysis shows that under these assumptions $F_0$ belongs to the domain of the operator $\Omega = w^{2+\delta} \frac{d^2}{d w^2}
 -  \mu\, w^\delta(w - m ) \frac{d}{d w}$ and Hille--Yosida theorem (cf. for example Ref. \cite{Paz}) applies. The Cauchy problem for \eqref{quo} with $F_0$ as initial data possesses therefore  a positive solution,
 unique  in $C([0,+\infty), \widetilde\Sigma ) \cap C^1([0,+\infty), C_b([0,+\infty)))$.
Consequently, $f(t,w)= F(t,w) f_\infty^\delta(w)$ is a positive solution of the  problem \eqref{bp-FP2}. Since $f_0$ belongs to the domain of the operator $\Omega^\star =  \frac{d}{d w} \left( \frac{d}{d w} \left(w^{2+\delta} \cdot\right )
 +  \mu\, w^\delta(w - m )\cdot \right )$, the solution $f(t)$ is unique in $C([0,+\infty), \Sigma ) \cap C^1([0,+\infty), L^1(\R_+))$. Feller further proved  that the $L^1$ norm is preserved, but this can be easily shown directly from the equation and from the boundary conditions contained in the definition of $\widetilde \Sigma$.
 \end{proof}
 \begin{rem}\label{ac}
 The solution $f(t)$ obtained by Feller's analysis is absolutely continuous with respect to the steady state $f_\infty^\delta$ for all $t\ge 0$.
 \end{rem}

\begin{rem}\label{desire}
The class of initial data in Theorem \ref{Feller} is quite restricted with respect to the natural one, which would contain all probability densities in $\R_+$. On the other hand, the properties of the solution to \fer{bp-FP2} guaranteed by Theorem \ref{Feller} allow us to investigate rigorously the large-time behavior of the solution, and to obtain exponential convergence to equilibrium in relative entropy at explicit rate. Consequently, the forthcoming analysis of the large-time behavior of the solution will be restricted to initial data as in Theorem \ref{Feller}.
 
We remark that in Ref. \cite{Fel2} Feller proved that for all $f_0 \in L^1(\R_+)$,  the Cauchy problem \eqref{bp-FP2}, posed in  $C([0,+\infty), L^1(\R_+))  \cap C^1([0,+\infty), L^1(\R_+))$ still has a  solution defined through the semigroup generated by the operator $\Omega^\star$. However, unlike the case of initial values in $\Sigma$, it is not proven that this solution still satisfies the boundary conditions for positive times.
While leaving to further research the possibility to extend the result of Theorem \ref{Feller} to general initial data in $L^1(\R_+)$,  we will show in the last Section of the paper that convergence to equilibrium at exponential rate follows also for a larger class of initial data by resorting to the result for initial densities in $\Sigma$. 
\end{rem}


\subsection{An equivalent Fokker--Planck equation}\label{equiva}

A further interesting remark made by Feller in Ref. \cite{Fel2} was concerned with the possibility to introduce a transformation of variables to reduce the coefficient $a(x)$ in \fer{FFPP} and \fer{FPad} to a constant value. If the Fokker--Planck equation  \fer{quo} is considered, the transformation considered in Ref. \cite{Fel2} can be expressed by
\begin{equation}\label{G}
F(t,x)=G(t,y), \quad y = y(x)= \frac 2\delta \frac 1{x^{\delta/ 2}}.
\end{equation}
Note that \fer{G} implies
\be\label{dy}
\frac{dy}{dx}= - \frac 1 {x^{1+\delta/2}},
\ee
so that
\begin{equation}\label{newvariable}
\left(\frac{dy}{dx}\right )^2 =  \frac 1{x^{2+\delta}}.
\end{equation}
The change of variable in \fer{G} maps $\R_+$ into $\R_+$, and it is well-defined in the interior. Since
\[
\begin{aligned}
&\frac{\partial F(t,x)}{\partial x}= \frac{\partial G(t,y)}{\partial y} \frac{dy}{dx}\\
&\frac{\partial^2 F(t,x)}{\partial x^2}= \frac{\partial^2 G(t,y)}{\partial y^2} \left( \frac {dy}{dx}\right )^2 + \frac{\partial G(t,y)}{\partial y}  \frac {d^2y}{dx^2},
\end{aligned}
\]
equation \eqref{quo} transforms into
\begin{equation}\label{adj-G}
\frac{\partial G(t,y)}{\partial t}= \frac{\partial^2 G(t,y)}{\partial y^2} -
\frac{\partial G(t,y)}{\partial y} \left(\mu \, m \left(\frac \delta 2\right )^{ \frac 2 \delta -1} y^{\frac 2\delta -1} - \frac 2\delta \left(1+ \mu + \frac \delta 2\right )\frac 1 y\right ),
\end{equation}
which is such that the diffusion coefficient is equal to unity.

If we denote the drift term by
\be\label{dri}
W'(y)= \mu \, m \left(\frac \delta 2\right )^{ \frac 2 \delta -1} y^{\frac 2\delta -1} - \frac 2\delta \left(1+ \mu + \frac \delta 2\right )\frac 1 y,
\ee
equation \eqref{adj-G} takes the form
\be\label{adj-GW} 
\frac{\partial G(t,y)}{\partial t}= \frac{\partial^2 G(t,y)}{\partial y^2}   - W'(y)\frac{\partial G(t,y)}{\partial y}.
\ee
Equation \fer{adj-GW} is the adjoint of the  Fokker--Planck equation  
\be\label{FP-z}
\frac{\partial g(t,y)}{\partial t}= \frac{\partial^2 g(t,y)}{\partial y^2} +
\frac{\partial}{\partial y}\left(  W'(y)g(t,y)\right ),
\ee
still characterized by a  diffusion coefficient equal to one. Equation \fer{FP-z} has a steady state of the form
\be\label{SS-z}
g_\infty(y)= C e^{-W(y)}
\ee
where  $C>0$  and for $y_0>0$
\be\label{pot}
W(y)= \int_{y_0}^y W'(z) dz, \quad y>0.
\ee
 If we fix the mass of the steady state \fer{SS-z} equal to one, it is a simple exercise to reckon that \fer{SS-z} is a generalized Gamma density. We recall that the generalized Gamma is a probability density characterized in terms of a shape $\kappa>0$, a scale parameter $\theta >0$, and an exponent $\nu>0$, that reads \cite{Lie,Sta}
 \be\label{gg}
 h(y,\kappa,\nu, \theta) = \frac\nu{\theta^\kappa \Gamma \left( \kappa/\nu\right)}\, y^{\kappa -1} \exp\left\{ -\left( \frac y\theta\right)^\nu \right\}.
 \ee
For the steady state of the Fokker--Planck \fer{FP-z} the parameters  are given by  
 \be\label{para}
\nu = \frac 2\delta \ge 2, \quad \kappa =   \frac 2\delta(1+\delta +\mu) >2,  \quad  \theta = \frac 2{\delta \, (\mu m)^{\delta/2}}
 \ee
 with $\mu$ as in \eqref{simp}.
Equations \fer{FP-z} and \fer{adj-GW} are subject to boundary conditions derived from \fer{bc1} and \fer{bc2}, suitably modified according to the change of variables, which guarantee mass conservation. 

\begin{rem}\label{relaz}
We remark that the solutions $f(t,x)$ and $g(t,y)$ of equations \eqref{FP2} and \eqref{FP-z} are related by the change of variable \fer{G} so that
 \be\label{equiv-g}
 f(t,x) = g(t,y(x))\left| \frac{dy}{dx}\right|.
 \ee
 In particular, the same relation holds true between the generalized Gamma density defined in \fer{gg} and the inverse Gamma density \fer{new-eq}, so that
 \be\label{equiv}
 f_\infty^\delta(x) = g_\infty(y(x))\left| \frac{dy}{dx}\right|.
 \ee
 Moreover,  since $f(t)$ is absolutely continuous with respect to $f_\infty^\delta$ (see Remark \eqref{ac}), it follows that also $g(t)$ is absolutely continuous with respect to $g_\infty$.
\end{rem}

\begin{rem}\label{caso-no}
{\rm The case $\delta =0$ leads to a completely different behavior. In this case, the transformation \fer{G} becomes
\begin{equation}\label{G1}
F(t,x)=G(t,y), \quad y = y(x)= \log x,
\end{equation}
that implies
\be\label{dy1}
\frac{dy}{dx}=  \frac 1 {x}.
\ee
Therefore, the change of variable \fer{G1} maps $\R_+$ into $\R$, and the new Fokker-Planck equation with constant coefficient of diffusion is given by \fer{FP-z}, where now
 \be\label{St}
 W'(y) = 1+\mu-  \mu m e^{-y}, \quad y \in \R,
 \ee
so that
 \be\label{eq-no}
 g_\infty(y) = C \exp\left\{ -\left((1+\mu)y + \mu m e^{-y}\right)\right\}, \quad y \in \R.
 \ee
In the same way as  for the density \fer{gg}, the value of the constant $C$ that makes $g_\infty$  in \fer{eq-no} a probability density on $\R$ follows from the change of variable \fer{G1} applied to the equilibrium density \fer{new-eq}, with $\delta = 0$.  One obtains
 \[
 C = \frac{(\mu m)^{1+\mu}}{\Gamma(1+\mu)}.
 \]
Note that, in contrast to the case $\delta >0$,  $g_\infty$  is defined on the whole line, and it exhibits different decay rates at $y \to \pm\infty$. We will be back to further consequences of this behavior in the forthcoming Section.  
}
\end{rem}

\subsection{Logarithmic Sobolev inequality and exponential decay of relative entropy}\label{deca}

The main result of  Section \ref{equiva} was to show that the Fokker--Planck type equation \fer{FP2} and its adjoint can be equivalently written as  Fokker--Planck equations with constant coefficient of diffusion and  nonlinear coefficient of  drift. This new Fokker--Planck equation is further characterized by a steady state in the form of a generalized Gamma density.  

Fokker--Planck equations  of type \fer{FP-z} have been introduced and studied in Ref. \cite{OV} in connection with the celebrated discovery by Bakry and \'Emery \cite{BE}, as a useful  mathematical tool to obtain logarithmic Sobolev inequalities for probability densities different from the standard Gaussian density. Given the equilibrium density $g_\infty$ of the Fokker--Planck equation \fer{FP-z}, let $H(g|g_\infty)$ denote the Shannon entropy functional of a probability density $g(y)$,  with $y \in \R_+$, relative to $g_\infty$, given by
\be\label{entr-class}
H(g|g_\infty) = \int_{\R_+} g(y) \log \frac{g(y)}{g_\infty(y)}\, dy.
\ee
Moreover, let $I(g|g_\infty)$ denote the Fisher information of the probability density $g$ relative to $g_\infty$,  defined as
\be\label{fisher-class}
I(g|g_\infty) = \int_{\R_+} \left(
\frac{d}{d y} \log  \frac {g(y)}{g_\infty(y)}\right )^2 g(y)\, dy.
\ee
Provided that the potential $W$ defined in \fer{pot} is uniformly convex, so that
 \be\label{log1}
 W''(y) \ge \rho >0,
 \ee
Bakry and \'Emery \cite{BE} proved that the probability measure $g_\infty$ satisfies a logarithmic Sobolev inequality with constant $\rho$. This corresponds to say that, for all probability measures $g$ absolutely continuous with respect to $g_\infty$, it holds
 \be\label{LS}
H(g|g_\infty) \le \frac 1{2\rho} I(g|g_\infty).  
 \ee
Inequality \fer{LS} was obtained in Ref. \cite{OV} by studying the evolution in time of the relative Shannon entropy $H(g(t)|g_\infty)$, where $g(t)$ is the solution to the Fokker--Planck equation \fer{FP-z}. 

In presence of boundary conditions that guarantee that the contribution on the boundary vanishes,  it is immediate to verify that, at any given time $t >0$, the relative Fisher information $I(g(t)|g_\infty)$ coincides with the entropy production at time $t$. In this case
 \be\label{EP}
 \frac d{dt}H(g(t)|g_\infty) = -I(g(t)|g_\infty).
 \ee
As a consequence, if inequality \fer{LS} holds  and since $g(t)$ is absolutely continuous with respect to $g_\infty$ (see Remark \eqref{relaz}), one easily concludes with the exponential convergence of the relative Shannon entropy to zero at the explicit rate $2\rho$.

\begin{rem} We note once again that, starting from initial data as in Theorem \ref{Feller}, the properties of the solution to the Fokker--Planck equation \fer{bp-FP2}, and in particular its behavior on the boundaries fully justify the computations on the solution to the new Fokker--Planck equation \fer{FP-z}, leading to \fer{EP}. 
\end{rem}
  
To verify the uniform convexity of $W(y)$, let us estimate $W''(y)$. The computations are immediate if $\delta =1$. Indeed, if we set $\delta =1$ in expression \fer{dri} we obtain
\[
W'(y)= \frac {m\mu }2 y - \left(\frac 32 +\mu \right ) \frac 2{y},
\]
that differentiating gives
\be\label{tau1}
W''(y)= \frac {m\mu}2  + \left(\frac 32 +\mu \right ) \frac 2{y^2}\geq \frac {m\mu}2 :=\rho(1).
\ee
When $0<\delta < 1$ simple but tedious computations give the bound
 \be\label{bbb}
 W''(y) \ge \frac \delta{2} (m\mu)^{\delta}\left(1 + \frac \delta{2} +\mu  \right)^{1-\delta} \frac{(2-\delta)^\delta}{(1-\delta)^{1-\delta}}\,\delta^{-2\delta}:=\rho(\delta).
 \ee
Note that, as $\delta \to 1$, one correctly obtains
 \[
 \frac \delta{2} (m\mu)^{\delta}\left(1 + \frac \delta{2} +\mu  \right)^{1-\delta} \frac{(2-\delta)^\delta}{(1-\delta)^{1-\delta}} \delta^{-2\delta} \to \frac {m\mu}2.
 \]
Finally, for any given value $0< \delta \le 1$, it follows that the generalized Gamma density \fer{gg} with parameters given by \fer{para}, satisfies the logarithmic Sobolev inequality \fer{LS} with
$ \rho = \rho(\delta)  >0$
 with $\rho(\delta)$ as in \eqref{tau1} and \eqref{bbb}. 
 More explicitly,  for any $g$ absolutely continuous with respect to $g_\infty(y)= C e^{-W(y)}$ with $W$ as in \eqref{pot} and \eqref{dri}, we have obtained
 \be\label{LS-gammagen}
  \int_{\R_+} g(y) \log \frac{g(y)}{g_\infty(y)}\, dy \leq \frac 1{2\rho(\delta)} \int_{\R_+} \left(
\frac{d}{d y} \log  \frac {g(y)}{g_\infty(y)}\right )^2 g(y)\, dy.
 \ee
 
Hence, one can conclude that the solution to the Fokker--Planck equation \fer{FP-z} converges exponentially fast towards the equilibrium density \fer{gg} in relative entropy, at the explicit rate $2\rho(\delta)$.
 
 \begin{rem}
The original Bakry--\'Emery theorem was restricted to potentials $W$ satisfying $W\in L^\infty ([0,+\infty))\cap C^2([0,+\infty))$, and it is not directly applicable in our case,  due to the behavior of $W$ near $0$ and near $+\infty$. However, inequality \fer{LS}  can be still proven when $W$ is derived from \fer{dri}, by resorting to an approximation argument, similar to the one considered for example in Ref. \cite{FPTT18}.
 \end{rem}
 
\begin{rem}\label{gGamma}
{\rm The constant $\rho$ in \fer{bbb} can be alternatively written in terms of the parameters $\kappa >2, \theta$ and $\nu> 2$ characterizing the generalized Gamma density \fer{gg}. In this case we obtain
 \be\label{new-c}
 \rho = \rho(\kappa,\theta, \nu) = \frac 1{\theta^2} (\kappa -1)^{1-2/\nu}\frac\nu{\nu -2}\left( \frac{\nu(\nu-1)(\nu-2)}2 \right)^{2/\nu}.
 \ee
 }
\end{rem}


\subsection{Logarithmic Sobolev inequality with weight and exponential decay of relative entropy}

The results of Section \ref{deca} can be easily translated to the original Fokker--Planck equation \fer{FP2}. 


 In this way, we obtain a new logarithmic Sobolev type inequality satisfied by the inverse Gamma densities $f_\infty^\delta$. To this end, let us introduce the
 weighted Fisher information $I_\delta$ of a probability density  $f$ relative to $f_\infty^\delta$ as follows
 \be\label{fis-peso}
 I_\delta(f|f_\infty^\delta) = \int_{\R_+}x^{2+\delta} \left(
\frac{d}{d x}  \log \frac{f(x)}{f_\infty^\delta(x)} \right )^2\, f(x) \, dx.
 \ee
We prove the following
  
 \begin{thm}\label{LS-theo}
 Let $\delta \in (0,1]$  and $f_\infty^\delta$ the inverse Gamma density defined in \eqref{new-equ}.
  For any
  probability density $f \in L^1(\R_+)$ absolutely continuous with respect to $f_\infty^\delta$ it holds
  \be\label{LS-peso}
 H(f,f_\infty^\delta) \leq \frac 1{2\rho(\delta)}  I_\delta(f, f_\infty^\delta).
 \ee
 The positive constant $\rho(\delta) >0$ is given in \eqref{tau1} and \eqref{bbb} .
 \end{thm}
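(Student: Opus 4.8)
The plan is to transfer the logarithmic Sobolev inequality \eqref{LS-gammagen}, already established for the generalized Gamma density $g_\infty$ and the constant-diffusion Fokker--Planck equation \eqref{FP-z}, back to the inverse Gamma density $f_\infty^\delta$ via the change of variables \eqref{G}. The two key facts I would rely on are: (i) the relation \eqref{equiv-g}--\eqref{equiv}, which says $f(x) = g(y(x))\,|dy/dx|$ and $f_\infty^\delta(x) = g_\infty(y(x))\,|dy/dx|$, so that the \emph{ratios} are preserved, i.e. $f(x)/f_\infty^\delta(x) = g(y(x))/g_\infty(y(x))$; and (ii) the relation \eqref{newvariable}, $(dy/dx)^2 = x^{-(2+\delta)}$, which is exactly the weight that appears in the definition \eqref{fis-peso} of $I_\delta$.

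First I would check that the Shannon entropy is invariant under the change of variables: writing $F(x) = f(x)/f_\infty^\delta(x) = g(y(x))/g_\infty(y(x))$, the substitution $y = y(x)$ (with $dy = |dy/dx|\,dx$, the map being a decreasing bijection of $\R_+$ onto $\R_+$) gives
\[
H(f|f_\infty^\delta) = \int_{\R_+} f(x)\log F(x)\,dx = \int_{\R_+} g(y)\log\frac{g(y)}{g_\infty(y)}\,dy = H(g|g_\infty),
\]
since $f(x)\,dx = g(y(x))\,|dy/dx|\,dx = g(y)\,dy$. Next I would do the analogous computation for the Fisher information. Using the chain rule $\frac{d}{dx}\log F(x) = \frac{d}{dy}\log\frac{g}{g_\infty}\big|_{y=y(x)}\cdot \frac{dy}{dx}$, together with \eqref{newvariable}, one gets
\[
x^{2+\delta}\left(\frac{d}{dx}\log F(x)\right)^2 = x^{2+\delta}\left(\frac{dy}{dx}\right)^2\left(\frac{d}{dy}\log\frac{g}{g_\infty}\right)^2 = \left(\frac{d}{dy}\log\frac{g}{g_\infty}\right)^2,
\]
so that, again substituting $y = y(x)$ and using $f(x)\,dx = g(y)\,dy$,
\[
I_\delta(f|f_\infty^\delta) = \int_{\R_+} x^{2+\delta}\left(\frac{d}{dx}\log F(x)\right)^2 f(x)\,dx = \int_{\R_+}\left(\frac{d}{dy}\log\frac{g(y)}{g_\infty(y)}\right)^2 g(y)\,dy = I(g|g_\infty).
\]
The inequality \eqref{LS-peso} then follows immediately from \eqref{LS-gammagen} applied to $g$: $H(f|f_\infty^\delta) = H(g|g_\infty) \le \frac{1}{2\rho(\delta)} I(g|g_\infty) = \frac{1}{2\rho(\delta)} I_\delta(f|f_\infty^\delta)$.

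The one genuine point requiring care — and I expect it to be the main (modest) obstacle — is the bookkeeping around absolute continuity and the admissibility of $g$ in \eqref{LS-gammagen}. One must verify that if $f \in L^1(\R_+)$ is a probability density absolutely continuous with respect to $f_\infty^\delta$, then the function $g$ obtained by inverting \eqref{equiv-g}, namely $g(y) = f(x(y))\,|dx/dy|$, is a probability density on $\R_+$ absolutely continuous with respect to $g_\infty$ — this is exactly what is recorded in Remark \ref{relaz}, so it can be cited. If $I_\delta(f|f_\infty^\delta) = +\infty$ there is nothing to prove, so one may assume it is finite, which legitimizes the differentiations above (the ratio $F$ being weakly differentiable with the appropriate integrability); alternatively the whole identity chain can be run as equalities between (possibly infinite) nonnegative quantities, which sidesteps regularity worries entirely. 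No approximation argument beyond the one already invoked for \eqref{LS-gammagen} is needed, since the change of variables is an exact bijective diffeomorphism of the interior of $\R_+$.
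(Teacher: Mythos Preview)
Your proposal is correct and follows essentially the same route as the paper: apply the change of variables \eqref{G} to the logarithmic Sobolev inequality \eqref{LS-gammagen} for the generalized Gamma density, using \eqref{equiv-g}--\eqref{equiv} to identify the ratios and \eqref{newvariable} to produce the weight $x^{2+\delta}$ in the Fisher information. Your write-up is in fact slightly more explicit than the paper's (you spell out separately the invariance of the relative entropy and of the Fisher information, and you note the absolute continuity bookkeeping via Remark~\ref{relaz}), but the argument is the same.
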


 \begin{proof}
 We begin by considering the logarithmic Sobolev  inequality \eqref{LS-gammagen} satisfied by the generalized Gamma density.
By the change of  variable $y =  2/\delta \, x^{- \delta/ 2}$ as in \eqref{G},
inequality \eqref{LS-gammagen} becomes
\[
\begin{aligned}
&  \int_{\R_+} 
g(y(x)) \log 
\frac{g(y(x))\left| \frac{dy}{dx}\right|}{g_\infty(y(x))\left| \frac{dy}{dx}\right| }
\left| \frac{dy}{dx}\right| \, dx 
\\
  &\leq \frac 1{2\rho(\delta)}
   \int_{\R_+} 
   \left(
\frac{d}{d x} \log  \frac {g(y(x))\left| \frac{dy}{dx}\right| }{g_\infty(y(x))\left| \frac{dy}{dx}\right| } \left| \frac{dx}{dy}\right| 
\right )^2 g(y(x))\left| \frac{dy}{dx}\right| \, dx.
\end{aligned}
\]
Now, recalling the relation \eqref{equiv} between $g_\infty$ and $f_\infty^\delta$ and observing that $g$ is absolutely continuous with respect to $g_\infty$ if and only if $f(x) = g(y(x))\left| \frac{dy}{dx}\right|$ is absolutely continuous with respect to $f_\infty^\delta$, we get for all $f$  absolutely continuous with respect to $f_\infty^\delta$
\[
 \int_{\R_+} 
f(x) \log 
\frac{f(x)}{f_\infty^\delta(x)}
 \, dx \leq \frac 1{2\rho(\delta)}
   \int_{\R_+} x^{2+\delta}
   \left( \frac{d}{d x} \log  \frac {f(x) }{f_\infty^\delta(x)} 
\right )^2 f(x)\, dx.
\]
\end{proof}

It is still true \cite{FPTT17} that if $f(t)$ is a solution of the Fokker--Planck equation \eqref{FP2}, then 
at any given time $t >0$, the relative weighted Fisher information $I_\delta(f(t)|f_\infty^\delta)$ coincides with the entropy production at time $t$,  namely
 \[
 \frac d{dt}H(f(t)|f_\infty^\delta) = -I_\delta(f(t)|f_\infty^\delta).
 \]

The absolute continuity of the solution $f(t)$ with respect to the steady state $f_\infty^\delta$ (see Remark \eqref{ac}) and inequality \fer{LS-peso} then imply  exponential convergence of the solution to the Fokker--Planck equation \fer{FP2} towards the equilibrium density \fer{new-eq} at the explicit rate $2\rho(\delta)$. The result can be summarized in the following

\begin{thm} \label{expon}
Let $0<\delta \le 1$, and let $f(t)$ be the (unique) solution to the Fokker--Planck equation \fer{FP2}, corresponding to an initial value $f_0$ satisfying the conditions of Theorem \ref{Feller}.
Then, $f(t)$ converges exponentially fast in time towards equilibrium in relative entropy, and 
 \be\label{con-exp}
 H(f(t)|f_\infty^\delta)\le H(f_0|f_\infty^\delta) e^{-2\rho(\delta) \,t}.
 \ee
In particular, the Csiszar-Kullback inequality \cite{Csi,Kul}, implies exponential convergence in the $L_1(\R_+)$ sense at the explicit rate $\rho(\delta)$.
 
\end{thm}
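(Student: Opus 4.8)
The plan is to run the classical entropy--entropy production scheme, now in the weighted setting, powered by the logarithmic Sobolev inequality with weight of Theorem~\ref{LS-theo}. First I would differentiate the relative Shannon entropy $t\mapsto H(f(t)|f_\infty^\delta)$ along the flow. Writing the Fokker--Planck equation \fer{FP2} in the divergence form \fer{FPalt}, namely $\partial_t f = \partial_w\!\left[w^{2+\delta} f\,\partial_w\log(f/f_\infty^\delta)\right]$, and using conservation of mass (so that the contribution of the constant $1$ in $\log(f/f_\infty^\delta)+1$ integrates to zero), one gets
\[
\frac{d}{dt}H(f(t)|f_\infty^\delta)=\int_{\R_+}\partial_w\!\left[w^{2+\delta}f\,\partial_w\log\frac{f}{f_\infty^\delta}\right]\log\frac{f}{f_\infty^\delta}\,dw .
\]
An integration by parts produces the boundary term $\bigl[\,w^{2+\delta}f\,\partial_w\log(f/f_\infty^\delta)\,\log(f/f_\infty^\delta)\,\bigr]_{w=0}^{w=+\infty}$ plus the bulk term $-\int_{\R_+}w^{2+\delta}f\bigl(\partial_w\log(f/f_\infty^\delta)\bigr)^2dw=-I_\delta(f(t)|f_\infty^\delta)$. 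The boundary term vanishes: by the no--flux condition \fer{bc1} built into the class $\Sigma$ of Theorem~\ref{Feller}, the chain of identities preceding \fer{FPalt} shows $w^{2+\delta}f\,\partial_w\log(f/f_\infty^\delta)\to 0$ at both endpoints, while $\log(f/f_\infty^\delta)$ stays bounded near $w=0$ and $w=+\infty$ because $f/f_\infty^\delta$ has finite strictly positive limits there (again by the definition of $\Sigma$). This is exactly the entropy production identity $\frac{d}{dt}H(f(t)|f_\infty^\delta)=-I_\delta(f(t)|f_\infty^\delta)$ recorded (after Ref.~\cite{FPTT17}) just before the statement.

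Next I would invoke Theorem~\ref{LS-theo}. Since $f(t)$ is, for every $t\ge 0$, a probability density absolutely continuous with respect to $f_\infty^\delta$ (Remark~\ref{ac}), inequality \fer{LS-peso} gives $I_\delta(f(t)|f_\infty^\delta)\ge 2\rho(\delta)\,H(f(t)|f_\infty^\delta)$, hence the differential inequality
\[
\frac{d}{dt}H(f(t)|f_\infty^\delta)\le -2\rho(\delta)\,H(f(t)|f_\infty^\delta),
\]
and Gr\"onwall's lemma yields \fer{con-exp}. For the $L^1$ statement, the Csisz\'ar--Kullback--Pinsker inequality $\|f(t)-f_\infty^\delta\|_{L^1(\R_+)}^2\le 2\,H(f(t)|f_\infty^\delta)$ combined with \fer{con-exp} gives $\|f(t)-f_\infty^\delta\|_{L^1(\R_+)}\le\sqrt{2H(f_0|f_\infty^\delta)}\;e^{-\rho(\delta)t}$, i.e. exponential decay at rate $\rho(\delta)$.

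The one genuinely delicate point is the rigorous justification of the entropy production identity: that differentiation under the integral sign is legitimate and that all boundary contributions at $w=0$ and $w=+\infty$ truly vanish for all $t>0$. This is precisely where the restriction to initial data in $\Sigma$ pays off, since Theorem~\ref{Feller} supplies the regularity of $f(t)$ up to the boundary, the finite positive limits of $f/f_\infty^\delta$, and the persistence of the no--flux condition, making these manipulations rigorous along the lines of Ref.~\cite{FPTT17}; everything else is a direct application of Theorem~\ref{LS-theo}, Gr\"onwall, and Csisz\'ar--Kullback. An essentially equivalent alternative would be to transport the problem to the constant-diffusion equation \fer{FP-z} via \fer{G}, apply the standard logarithmic Sobolev inequality \fer{LS-gammagen} for the generalized Gamma steady state there, and pull the result back through \fer{equiv-g}; but the weighted inequality of Theorem~\ref{LS-theo} lets us argue directly on \fer{FP2}.
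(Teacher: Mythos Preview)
Your proposal is correct and follows essentially the same approach as the paper: the argument preceding Theorem~\ref{expon} is precisely the entropy--entropy production scheme you describe, combining the weighted entropy production identity $\frac{d}{dt}H(f(t)|f_\infty^\delta)=-I_\delta(f(t)|f_\infty^\delta)$ with the weighted logarithmic Sobolev inequality of Theorem~\ref{LS-theo}, Remark~\ref{ac}, Gr\"onwall, and Csisz\'ar--Kullback. The alternative route via the change of variables \fer{G} that you mention at the end is in fact how the paper first establishes the inequality in Section~\ref{deca} before transporting it back; so both viewpoints you present are already in the paper.
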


\begin{rem}
If the initial value $f_0$ satisfies the assumptions of Theorem \ref{Feller}, then the ratio $ {f_0}/{f_\infty^\delta}$ is nonnegative and bounded. Hence the Shannon entropy  of $f_0$ relative to the equilibrium density $f_\infty^\delta$
is bounded.
\end{rem}


\begin{rem}
{\rm As $\delta \to 0$, the positive constant $\rho(\delta)$ vanishes, and the uniform convexity of the potential $W(y)$ is lost. This unpleasant result can be obtained looking directly to the expression of the potential \fer{St} defined in Remark \ref{caso-no}. Indeed, in this case it holds
 \[
 W''(y) =\mu m e^{-y} \ge 0.
 \]
 Hence, the original Fokker--Planck equation introduced in Ref. \cite{BM} by Bouchaud and M\'ezard, and subsequently considered in Ref. \cite{CoPaTo05} as the grazing limit of a kinetic model of Boltzmann type, is equivalent to a Fokker--Planck equation with constant coefficient of diffusion and drift given by a potential that it is not uniformly convex. Hence, for this Fokker--Planck equation exponential convergence towards equilibrium in relative entropy does not follows directly from a logarithmic Sobolev inequality.
 }
\end{rem}

\subsection{Exponential decay for general $L^1$-data}

As pointed out in Remark \ref{desire},  in Ref. \cite{Fel2} Feller proved that for any probability density $f_0 \in L^1(\R_+)$,  the Cauchy problem \eqref{bp-FP2}, posed in  $C([0,+\infty), L^1(\R_+))  \cap C^1([0,+\infty), L^1(\R_+))$  has a  solution defined through the semigroup generated by the operator $\Omega^\star$ recalled in the proof of Theorem \ref{Feller}. Moreover, this semigroup is a contraction in $L^1(\R_+)$. Therefore, given two initial densities $f_0$ and $g_0$,  for any time $t >0$  the solutions $f(t)$ and $g(t)$ to the the Cauchy problem \eqref{bp-FP2} with initial data  $f_0$  and $g_0$ respectively satisfy
 \be\label{contr}
 \| f(t) -g(t)\|_{L^1(\R_+)} \le  \| f_0 -g_0\|_{L^1(\R_+)}.
 \ee
 If the solution $g(t)$ converges exponentially fast to equilibrium, and $f_0$ and $g_0$ are close enough, it is reasonable to guess that the solution $f(t)$ does the same. This is indeed what we are going to prove, provided that the entropy of the initial density $f_0$ relative to the equilibrium  is assumed to be bounded. 
 \vskip 1cm
   \begin{thm} \label{expon2}
Let $0<\delta \le 1$, and let $f(t)$ be the solution to the Fokker--Planck equation \fer{FP2} corresponding to a probability density $f_0\in L^1(\R_+)$ defined through the semigroup generated by $\Omega^*$.  Suppose moreover that the density $f_0$ satisfies the following conditions
\begin{align}
&\int_1^{+\infty} f_0(w) w^\alpha d w<+\infty \label{mom}\\
&\int_0^1 \frac {f_0(w)} w dw <+\infty \label{int-zero}\\
&\int_{\R_+} f_0(w) \log_+f_0(w) dw <+\infty \label{entr}
\end{align}
for a  constant $\alpha \in (0,1] $.
Then, $f(t)$ converges exponentially fast in time towards equilibrium in $L^1(\R_+)$, and there exists a positive constant $K= K(f_0)$ such that 
 \be\label{con-exp2}
  \| f(t) - f_\infty^\delta\|_{L^1(\R_+)} \le 2 K^{1/2}e^{-\rho(\delta) \,t}
 \ee
 where $\rho(\delta)$ is the positive constant rate appearing in formulas \eqref{tau1} and \eqref{bbb}.
\end{thm}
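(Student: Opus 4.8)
The plan is to deduce Theorem~\ref{expon2} from Theorem~\ref{expon} by approximating the general datum $f_0$ by densities in the class $\Sigma$ of Theorem~\ref{Feller}, and then transporting the exponential decay by means of the $L^1$--contraction \eqref{contr} of the semigroup generated by $\Omega^\star$ and the Csiszar--Kullback inequality; recall that, the boundaries $w=0$ and $w=+\infty$ being of entrance type, this semigroup preserves positivity and total mass, so that $f(t)$ is a probability density for every $t>0$. The first step is to check that the hypotheses \eqref{mom}, \eqref{int-zero}, \eqref{entr} guarantee $H(f_0|f_\infty^\delta)<+\infty$. From \eqref{new-equ} one has $-\log f_\infty^\delta(w)=c_\delta+\mu m/w+(2+\delta+\mu)\log w$ with $c_\delta$ a constant, hence $\log_+\!\big(1/f_\infty^\delta(w)\big)\le C(1+1/w)$ for $0<w\le1$ and $\log_+\!\big(1/f_\infty^\delta(w)\big)\le C(1+\log w)$ for $w\ge1$. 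Using the elementary inequality $\log(ab)\le\log_+a+\log_+b$ with $a=f_0(w)$, $b=1/f_\infty^\delta(w)$, and $\log w\le C_\alpha w^\alpha$ on $[1,+\infty)$, I obtain
\[
\int_{\R_+}f_0\,\log\frac{f_0}{f_\infty^\delta}\,dw\ \le\ \int_{\R_+}f_0\log_+ f_0\,dw+C\Big(1+\int_0^1\frac{f_0(w)}{w}\,dw+\int_1^{+\infty}f_0(w)\,w^\alpha\,dw\Big)=:K(f_0)<+\infty ,
\]
and since this left-hand integral is well defined (its positive part being dominated by the right-hand side) and nonnegative by Gibbs' inequality, $0\le H(f_0|f_\infty^\delta)\le K(f_0)$.

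The second and technically delicate step is to construct probability densities $f_0^{(n)}\in\Sigma$ with $f_0^{(n)}\to f_0$ in $L^1(\R_+)$ and $\limsup_{n\to\infty}H(f_0^{(n)}|f_\infty^\delta)\le H(f_0|f_\infty^\delta)$. Concretely, I would truncate $f_0$ to a compact interval $[1/n,n]$, mollify, renormalise to unit mass, and glue the outcome smoothly to small multiples of $f_\infty^\delta$ in neighbourhoods of $0$ and of $+\infty$, so that the limits of $f_0^{(n)}/f_\infty^\delta$ at the two endpoints lie in $\R_+$ and the no-flux conditions built into the definition of $\Sigma$ are satisfied. The $L^1$ convergence is immediate; the control of the relative entropy follows from the estimates of the first step, which show that the mass, and the positive and negative parts of $f_0\log(f_0/f_\infty^\delta)$, carried by neighbourhoods of $w=0$ and $w=+\infty$ vanish as the truncation is removed, so that no entropy escapes to the boundary. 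This is where all the quantitative assumptions \eqref{mom}, \eqref{int-zero}, \eqref{entr} are used, and it is the main obstacle: one must simultaneously fit the approximants into the rigid space $\Sigma$ (prescribed endpoint behaviour of $\phi/f_\infty^\delta$ and no-flux conditions), have $L^1$ convergence, and keep the relative entropy from concentrating near $w=0$, where $f_\infty^\delta$ is super-exponentially small, or near $w=+\infty$, where $f_\infty^\delta$ has only a power-law tail.

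Finally, let $f^{(n)}(t)$ be the solution of \eqref{FP2} with initial datum $f_0^{(n)}$ (which coincides, by uniqueness, with the one of Theorem~\ref{Feller}). Since $f_0^{(n)}\in\Sigma$, Theorem~\ref{expon} yields $H(f^{(n)}(t)|f_\infty^\delta)\le H(f_0^{(n)}|f_\infty^\delta)\,e^{-2\rho(\delta)t}$, whence, by the Csiszar--Kullback inequality, $\|f^{(n)}(t)-f_\infty^\delta\|_{L^1(\R_+)}\le\sqrt{2H(f_0^{(n)}|f_\infty^\delta)}\,e^{-\rho(\delta)t}$; moreover \eqref{contr} gives $\|f(t)-f^{(n)}(t)\|_{L^1(\R_+)}\le\|f_0-f_0^{(n)}\|_{L^1(\R_+)}\to0$. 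Combining these two estimates by the triangle inequality and letting $n\to\infty$ (using the second step),
\[
\|f(t)-f_\infty^\delta\|_{L^1(\R_+)}\ \le\ \sqrt{2\,H(f_0|f_\infty^\delta)}\;e^{-\rho(\delta)t}\ \le\ 2\,K(f_0)^{1/2}\,e^{-\rho(\delta)t},
\]
since $\sqrt2\le2$ and $H(f_0|f_\infty^\delta)\le K(f_0)$ by the first step; this is precisely \eqref{con-exp2}. The remaining ingredients — the $L^1$-contraction, the Csiszar--Kullback inequality, and Theorem~\ref{expon} — are applied essentially verbatim.
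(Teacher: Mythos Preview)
Your proposal is correct and follows essentially the same route as the paper: approximate $f_0$ by densities in $\Sigma$ built from a compactly supported mollification of $f_0$ convexly combined with $f_\infty^\delta$, control their relative entropy uniformly via the moment/entropy assumptions \eqref{mom}--\eqref{entr}, and conclude by Theorem~\ref{expon}, Csiszar--Kullback, the $L^1$--contraction \eqref{contr}, and the triangle inequality. The only cosmetic differences are that the paper writes down the approximant explicitly as $g_\epsilon=\epsilon f_\infty^\delta+(1-\epsilon)\,\tilde f_\epsilon*\phi_\epsilon$ and, instead of sending $n\to\infty$ at fixed $t$, chooses $\epsilon=\epsilon(t)\le K(f_0)^{1/2}e^{-\rho(\delta)t}$; correspondingly it only establishes (and only needs) the uniform bound $H(g_\epsilon|f_\infty^\delta)\le K(f_0)$ rather than your stronger $\limsup$ claim.
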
    

\begin{rem}\label{entropia}
Conditions \eqref{mom} and \eqref{entr} imply
$ f_0 \log f_0\in L^1(\R_+)$.
Indeed, let  $0<\e < \frac \alpha{\alpha +1} <1$. Then for some positive constant $C=C(\epsilon)$ we get
\[
\begin{aligned}
&\int_{\R_+} f_0(w) \log_- f_0(w) dw = \int_{\R_+} f_0(w) (-\log f_0(w)) \chi_{\{f_0(w) \leq 1\}} dw \\
&\leq C \int_{\R_+} f_0(w) \frac 1{(f_0(w))^\e} \chi_{\{f_0(w) \leq 1\}} dw \\
& =C \int_0^1(f_0(w))^{1-\e} \chi_{\{f_0(w) \leq 1\}} dw +
C\int_1^{+\infty} (f_0(w))^{1-\e}  \chi_{\{f_0(w) \leq 1\}} dw\\
&\leq C + C\int_1^{+\infty} \frac 1{w^{\alpha (1-\e)}}(f_0(w))^{1-\e} w^{ \alpha (1-\e)} dw \\
&\leq C+ C\left(\int_1^{+\infty} \frac 1 {w^ \frac {\alpha(1-\e)}\e}dw \right )^\e  \left(\int_1^{+\infty}  f_0(w) \ w^\alpha\  dw \right )^{1-\e }<+\infty. 
\end{aligned}
\]
In the last line we applied H\"older inequality.  The integrals are bounded since the exponent   $ \frac {\alpha(1-\e)}\e >1$,  and  \fer{mom} holds true. 
\end{rem}

\begin{rem}
The assumptions made on $f_0$ imply the boundedness of the  relative entropy $H(f_0| f_\infty^\delta)$. 
Since
\[
\int_{\R_+} f_0(w) \log \frac  {f_0(w)}{f_\infty^\delta(w)}  dw =\int_{\R_+} f_0(w) \log f_0(w) dw -
\int_{\R_+} f_0(w) \log f_\infty^\delta(w)  dw,
\]
and the Shannon entropy of the initial value is bounded, the boundedness of the relative entropy follows by the boundedness of the second term. One has
\begin{equations}\label{sum2}
&\int_{\R_+} f_0(w) \log f_\infty^\delta(w)  dw =  \int_{\R_+} f_0(w) \log \left(  \frac{(\mu\, m)^{1+\delta + \mu}}{\Gamma(1+\delta + \mu)}\frac{\exp\left(-\frac{\mu\, m}{w}\right)}{w^{2+\delta +\mu}}
\right )  dw\\
&=  \log \left(  \frac{(\mu\, m)^{1+\delta + \mu}}{\Gamma(1+\delta + \mu)}\right)\int_{\R_+} f_0(w) \, dw  - \\ &(2+\delta + \mu) \int_{\R_+}  f_0(w)\log w \, dw  - \mu m\int_{\R_+} {f_0(w)}\frac 1w \, dw
 \\
&= C_1 \int_{\R_+} f_0(w) dw  +  C_2\int_{\R_+} {f_0(w)}\log w\, dw +  C_3\int_{\R_+}  {f_0(w)}\frac 1w \, dw,
\end{equations}
for some real constants $C_1$, $C_2$  and $C_3$.
Therefore
\begin{equation}
\left|\int_{\R_+} f_0(w) \log f_\infty^\delta(w)  dw \right|\le D_1 \int_{\R_+} f_0(w) dw  + D_2\int_{\R_+} w^\alpha {f_0(w)} \, dw +D_3\int_{\R_+} \frac {f_0(w)} w dw,
\end{equation}
for some positive constants  $D_1$, $D_2$  and $D_3$.
 Indeed, 
 \[
\left| \int_{\R_+}  f_0(w)  \log w \, dw \right| \le \int_{0}^1 \frac {f_0(w)} w dw + \frac 1\alpha \int_1^{+\infty} w^\alpha {f_0(w)} \, dw.
 \]
 
This shows a deep analogy with the assumptions  which imply convergence to equilibrium for the classical Boltzmann equation \cite{TV,Vi1} In that case, in dimension $n \ge 1$, the equilibrium is the Mawellian density
$M(w)=\frac 1{(2\pi )^{n/2}} e^{-\frac {|w|^2}{2}}$, and the assumption
 \[
 \int_{\R^n} (1+|w|^2 + |\log f_0(w)|) f_0(w) dw = C < +\infty
 \]
 guarantees both the boundedness of the entropy of the initial datum $f_0$ relative to the Maxwellian 
 \[
\int_{\R^n} f_0(w) \log \frac   {f_0(w)}{M(w)}  dw <+\infty,
 \]
 and the convergence to equilibrium in $L^1(\R^n)$.
\end{rem}

\begin{proof}
 Let us consider an initial density $f_0$ satisfying assumptions \eqref{mom}, \eqref{int-zero} and \eqref{entr}.
 Since the space $\Sigma$ is dense in $L^1(\R_+)$, for any given $\epsilon \ll 1$ we can find a probability density  $g_\epsilon \in \Sigma$ such that its relative entropy with respect to the steady state
$f_\infty^\delta$ (which is  a positive quantity due to the bound $x\log x-x+1\geq 0$ for any $x>0$)  is bounded by a constant depending only on $f_0$
 \be\label{hh}
  H(g_\epsilon |f_\infty^\delta) \le  K(f_0),
   \ee
  and at the same time
  \be\label{jj}
   \| f_0 -g_\epsilon\|_{L^1(\R_+)} \le \epsilon. 
   \ee 
 To this aim, for a given $\epsilon >0$,  let us define
   \[
   \tilde f_\e(w):= \frac {f_0(w) \chi_{\{ \e <w <\frac 1\e\}}}{\| f_0\chi_{\{ \e <w <\frac 1\e\}}\|_{L^1(\R_+)}}.
   \]
  Then, $\tilde f_\e$ is a probability density for all $\e>0$.
  Further, let us introduce a function $\phi \in C^\infty (\R_+)$ satisfying $ 0\leq \phi (w)\leq 1$ for all $w\in \R_+$, $\int_{\R_+} \phi (w)dw =1$, $\mathrm {supp }\ \phi \subset \left[ \frac 12, 2\right ]$, $\phi(w)= 1$ on $\left[ 1, \frac 32\right ]$. Let 
   \[
   \phi_\e(w)= \frac 1\e \phi\left(\frac w\e\right ).
   \]
 denote a dilation of $\Phi$, which is such that  $\int_{\R_+} \phi_\e (w)dw =1$ and $\mathrm {supp }\ \phi_\e \subset \left[ \frac 12 \e, 2\e\right ]$  for all $\e >0$.
   Finally, let us introduce the function
   \be\label{appr}
   g_\eps (w):= \e f_\infty^\delta (w)+ (1-\e) \tilde f_\e \ast \phi_\e (w).
   \ee
  In definition \fer{appr} $*$ denotes the convolution operation in $\R_+$, classically defined on two functions $f$ and $g$ as
   \be\label{conv}
   f\ast g (w) = \int_0^w f(w-v)g(v) dv.
   \ee
   It is straightforward to verify that $g_\e$ is a probability density for all $\e>0$.
 By construction,  we can easily prove that $g_\e \in \Sigma$. Indeed, since the convolution term is smooth, the function $g_\e$ has the same regularity as the steady state $f_\infty^\delta$.  
Let us now estimate  the support of the convolution term 
 \[
   \tilde f_\e \ast \phi_\e (w) =\int _0^w \tilde f_\e(w-v) \phi_e (v) dv.
   \]
 Since $\mathrm {supp }\ \phi_\e \subset \left[ \frac 12 \e, 2\e\right ]$ and $\mathrm {supp }\ \tilde f_\e \subset \left[\e, \frac 1 \e\right ]$ then
 $\mathrm {supp }\ \tilde f_\e \ast \phi_\e \subset \left [ \e +\frac 12 \e, \frac 1\e +2\e\right ]$.
In particular, $g_\e $ coincides with $\e f_\infty^\delta$ in a neighborhood of $0^+$ and of $+\infty$ and satisfies the no--flux boundary conditions.

In addition, $g_\e$ converges to $f_0$ in $L^1(\R_+)$ for $\e \to 0$. 
In fact
\[
\begin{aligned}
\|g_\eps -f_0\|_{L^1(\R_+)} & =  \| \e f_\infty^\delta+ (1-\e) \tilde f_\e \ast \phi_\e-f_0\|_{L^1(\R_+)}\\
& \leq \e \|f_\infty^\delta\|_{L^1(\R_+)} + \| (1-\e) \tilde f_\e \ast \phi_\e-f_0\|_{L^1(\R_+)}.
\end{aligned}
\]
The first term of course vanishes for $\eps \to 0$.
The second term satisfies the inequality
\[
\| (1-\e) \tilde f_\e \ast \phi_\e-f_0\|_{L^1(\R_+)} \leq \| (1-\e)( \tilde f_\e \ast \phi_\e-f_0 )\|_{L^1(\R_+)} + \e \|f_0\|_{L^1(\R_+)}.
\]
The last term  vanishes for $\e \to 0$. The remaining term  satisfies
\begin{equations}\label{term}
& (1-\e)  \| \tilde f_\e \ast \phi_\e-f_0\|_{L^1(\R_+)}\leq  \\ &(1-\e) \| ( \tilde f_\e - f_0 ) \ast \phi_\e \|_{L^1(\R_+)} + (1-\e) \| f_0\ast \phi_\e -f_0\|_{L^1(\R_+)}.
 \end{equations}
 The last term in \fer{term} vanishes in reason of the fact that $\phi_\e$ is an approximation of the identity in $L^1(\R_+)$ (see e.g. Ref. \cite{dan}, Lemma 3 p. 481).
 The first term $( \tilde f_\e - f_0 ) \ast \phi_\e$ in \fer{term}  converges to 0, as $\e$ tends to 0, thanks to Fubini and Lebesgue dominated convergence Theorems.
Hence, we can conclude that $g_\e \to f_0$ in $L^1(\R_+)$ as $\e \to 0$.

Let us now prove  that we can find a positive constant $K(f_0) >0$  such that, for  small $\e >0$
\be\label{unif-bound}
H(g_\e|f_\infty^\delta) \leq K (f_0).
\ee
(In the rest of the proof the constant $K$ could vary from one line to another).
To this extent, let us estimate separately the two terms 
\be\label{hg}
H(g_\e)= \int_{\R_+} g_\e(w) \log g_\e(w) dw
\ee
(the Shannon entropy of $g_\e$) and 
\be\label{int}
 A(g_\e)= \int_{\R_+} g_\e(w) \log f_\infty^\delta(w) dw.
 \ee
If both are bounded, then we have $H(g_\e|f_\infty^\delta) = H(g_\e) - A(g_\e)$.
Let us begin by estimating $H(g_\e)$. We remark that $H(g_\e)$ is not necessarily positive. 
Since $g_\e \to f_0$ in $L^1$, by the lower  semi-continuity of the Shannon entropy, we obtain, for all $\e$ small enough, the lower bound 
\be\label{sotto}
H(g_\e) \geq   H(f_0)-1.
\ee
Moreover, since $g_\e$ is a convex combination of two terms and the Shannon entropy $H$ is a convex functional, then 
\[
H(g_\e) \leq \e H(f_\infty^\delta) + (1-\e) H(\tilde f_\e\ast \phi_\e).
\]
Further, proceeding as in Ref. \cite{LT} we get the bound
\[
H(\tilde f_\e\ast \phi_\e) \leq H(\tilde f_\e) =\frac{1}{\|f_0\chi_{\{\e <w<\frac 1\e\}}\|_{L^1}}\int_{\R_+} f_0(w) \log f_0(w) \chi_{\{\e <w<\frac 1\e\}} dw.
\]   
Since $f_0 \log f_0 \in L^1(\R_+)$ (see Remark \ref{entropia}), we conclude by the Lebesgue dominated convergence   that $H(\tilde f_\e) \to H(f_0)$ for $\e \to 0$. So, we get for small $\e$
\begin{equation*}
H(g_\e) \leq H(f_0)+1
\end{equation*}
and  exploiting  \eqref{sotto} we can obtain 
\be\label{sopra}
|H(g_\e)| \leq  |H(f_0)|+1.
\ee
Let us come to the term 
\be\label{K}
A(g_\e) =  \e \int_{\R_+} f_\infty^\delta (w) \log  f_\infty^\delta (w) dw +   (1-\e) \int_{\R_+} \tilde f_\e \ast \phi_\e (w) \log  f_\infty^\delta (w) dw.
\ee
The first term is  bounded.
For the second one, we obtain as in \fer{sum2} 
\begin{align}
 & \int_{\R_+} \tilde f_\e \ast \phi_\e (w) \log  f_\infty^\delta (w) dw\notag \\
 & = \int_{\R_+} \tilde f_\e \ast \phi_\e (w) \log  \left(  \frac{(\mu\, m)^{1+\delta + \mu}}{\Gamma(1+\delta + \mu)}\frac{\exp\left(-\frac{\mu\, m}{w}\right)}{w^{2+\delta +\mu}} \right )
  dw\notag\\
  &= C_1 \int_{\R_+}  \tilde f_\e \ast \phi_\e (w)\  dw + C_2  \int_{\R_+}  \tilde f_\e \ast \phi_\e (w) \log w\ dw + C_3   \int_{\R_+} \tilde f_\e \ast \phi_\e (w)\ \frac 1 w\ dw\label{treterm}
    \end{align}
for $C_1$, $C_2$ and $C_3$ explicit constants in $\R$.
Since $ \tilde f_\e \ast \phi_\e$ is a probability density,
\be\label{l1}
\| \tilde f_\e \ast \phi_\e\|_{L^1(\R_+)}=1.
\ee
Then we split the second integral into
\[
\begin{aligned}
  \int_{\R_+}  \tilde f_\e \ast \phi_\e (w) \log w\ dw &=   \int_{0}^1  \tilde f_\e \ast \phi_\e (w) \log w\ dw +   \int_1^{+\infty}  \tilde f_\e \ast \phi_\e (w) \log w \ dw\\
  &= A_1+A_2.
  \end{aligned}
  \]
 We have
 \be\label{b1}
 0\leq -A_1 \leq  \int_{0}^1  \tilde f_\e \ast \phi_\e (w) \ \frac 1 w\ dw = B_1
 \ee
and
\be\label{b2}
0\leq A_2 \leq \int_1^{+\infty}  \tilde f_\e \ast \phi_\e (w) \ w^\alpha\ dw =B_2.
\ee
Now, since $0\leq v\leq w$ implies $\frac 1 w \leq \frac 1v$ we get
\[
\begin{aligned}
B_1 &= \int_0^1 \int_0^w \tilde f_\e (v) \phi_\e (w-v) \frac 1w dv dw\leq  \int_0^1 \int_0^w \frac { \tilde f_\e (v)} v \phi_\e (w-v) dv dw\\
&\leq  \left\|\frac { \tilde f_\e (w )} w\ast \phi_\e (w) \right \|_{L^1(\R_+)}.
\end{aligned}
\] 
Since by assumption \eqref{int-zero}  $ {\tilde f_\e (w )}/w \to  {f_0 (w )}/ w$ in $L^1(\R_+)$,  proceeding as in estimate \eqref{term} we  prove that $({ \tilde f_\e (w)}/ w) \ast \phi_\e (w) \to  { f_0 (w )}/ w$ in $L^1(\R_+)$. Hence there is a positive constant $K$ such that 
\be\label{bound-B1}
B_1 \leq K \left\| \frac {f_0(w)}{w}\right \|_{L^1(\R_+)}.
\ee
Let us now evaluate the $B_2$ term. By the classical inequality $(x+y)^\alpha \leq x^\alpha +y^\alpha$, which holds for positive $x$, $y$ and $0<\alpha \leq1$, we get
\[
\begin{aligned}
B_2 &= \int_1^{+\infty}  \int_0^w \tilde f_\e(w-v) \phi_\e (v)  w^\alpha\ dv dw\\
&\leq  \int_1^{+\infty}  \int_0^w (w-v)^\alpha \tilde f_\e(w-v) \phi_\e (v) \ dv dw + \int_1^{+\infty}  \int_0^w  \tilde f_\e(w-v) v^\alpha \phi_\e (v) \ dv dw\\
&\leq  \int_1^{+\infty}  \left(w^\alpha  \tilde f_\e(w)\right ) \ast \phi_\e (w) \ dw  + \e^\alpha  \int_1^{+\infty}    \tilde f_\e(w) \ast \psi_\e(w) \ dw \\
&\leq  \left\|  \left(w^\alpha \tilde f_\e (w )\right ) \ast \phi_\e (w) \right \|_{L^1(\R_+)} + \e^\alpha \left\|  \tilde f_\e(w) \ast \psi_\e(w) \right \|_{L^1(\R_+)}
\end{aligned}
\] 
where 
 \[
\psi_\e(w)= \frac 1\e \left(\frac w\e\right )^\alpha \phi(\frac w\e).
\]
By assumption \eqref{mom} we have $w^\alpha \tilde f_\e (w ) \to w^\alpha f_0 (w )$ in $L^1(\R_+)$, 
therefore $ \left(w^\alpha \tilde f_\e (w )\right ) \ast \phi_\e (w)$ converges to $ w^\alpha f_0 (w )$ in $L^1(\R_+)$. Thus  we conclude that there is $K>0$ such that 
\be\label{bound-B2-1}
\left\|  \left(w^\alpha \tilde f_\e (w )\right ) \ast \phi_\e (w) \right \|_{L^1(\R_+)} \leq K \left\| w^\alpha f_0\right \|_{L^1(\R_+)}.
\ee
Moreover
\be\label{bound-B2-2}
\e^\alpha \left\|  \tilde f_\e(w) \ast \psi_\e(w) \right \|_{L^1(\R_+)} \leq  \e^\alpha \|f_0\|_{L^1(\R_+)} \| \psi \|_{L^1(\R_+)}\to 0, \quad \e \to 0.
\ee
Hence by \eqref{bound-B2-1} and \eqref{bound-B2-2}, there exists $K>0$ such that  for small $\e$
\be\label{bound-B2}
B_2 \leq K  \left\| w^\alpha f_0\right \|_{L^1(\R_+)}.
\ee
In view of  \eqref{b1}, \eqref{b2}, \eqref{bound-B1} and \eqref{bound-B2} we finally get
\be\label{log}
\left| \int_{\R_+}  \tilde f_\e \ast \phi_\e (w) \log w\ dw \right | \leq K \left(  \left\| \frac {f_0(w)}{w}\right \|_{L^1(\R_+)} +    \left\| w^\alpha f_0\right \|_{L^1(\R_+)}  \right ).
\ee
The last integral term in \eqref{treterm} can be bounded as follows
\[
\begin{aligned}
\int_{\R_+} \tilde f_\e \ast \phi_\e (w)\ \frac 1 w\ dw &= \int_0^1 \tilde f_\e \ast \phi_\e (w)\ \frac 1 w\ dw + \int_1^{+\infty} \tilde f_\e \ast \phi_\e (w) \ \frac 1w\ dw\\
&\leq B_1 + \| \tilde f_\e \ast \phi_\e \|_{L^1(\R_+)}.
\end{aligned}
\]
Hence, by  \eqref{bound-B1}  there exists $K>0$ such that 
\be\label{1sux}
\int_{\R_+} \tilde f_\e \ast \phi_\e (w)\ \frac 1 w\ dw \leq K  \left(  \left\| \frac {f_0(w)}{w}\right \|_{L^1(\R_+)} +    \left\| f_0\right \|_{L^1(\R_+)}  \right ).
\ee
Collecting inequalities \eqref{l1}, \eqref{log} and \eqref{1sux} we obtain that the term $A(g_\e)$ defined in \eqref{int}
can be bounded as follows
\[
|A(g_e)| \leq K \left(|H(f_\infty^\delta)| + \|f_0\|_{L^1(\R_+)} +  \left\| \frac {f_0(w)}{w}\right \|_{L^1(\R_+)} +    \left\| w^\alpha f_0\right \|_{L^1(\R_+)}\right ).
\]
Recalling inequality \eqref{sopra}, we have
 that, for some $K>0$ and for all $\eps>0$ small enough,
\[
\begin{aligned}
H(g_\e|f_\infty^\delta) &\leq |H(g_\eps)|+ |A(g_\e)| \\
&\leq K  \left(|H(f_0)|+1+ |H(f_\infty^\delta)| + \|f_0\|_{L^1(\R_+)} +  \left\| \frac {f_0(w)}{w}\right \|_{L^1(\R_+)} +    \left\| w^\alpha f_0\right \|_{L^1(\R_+)}\right )\\
&:= K(f_0).
\end{aligned}
\]
 For any given time $t>0$, let us fix
   \be\label{ce}
   \epsilon \le K(f_0)^{1/2}e^{-\rho(\delta) \,t}.
   \ee
  By means of inequality \fer{contr}, 
    \begin{equations}\label{xx}
    \| f(t) - f_\infty^\delta\|_{L^1(\R_+)} &\le \| f(t) -g(t)\|_{L^1(\R_+)}  + \| g(t) - f_\infty^\delta\|_{L^1(\R_+)} \\
    &\le\| f_0 -g_\eps\|_{L^1(\R_+)} + \| g(t) - f_\infty^\delta\|_{L^1(\R_+)}.
    \end{equations}
 Since $g(t)$  is solution to the Fokker--Planck equation \fer{FP2}, corresponding to an initial value $g_\eps$ satisfying the conditions of Theorem \ref{Feller}, thanks to Theorem \ref{expon} and condition \fer{hh}
  \[
 H(g(t)|f_\infty^\delta)\le H(g_\eps|f_\infty^\delta) e^{-2\rho(\delta) \,t} \le K(f_0) e^{-2\rho(\delta) \,t}.
 \]
Hence, by Csiszar--Kullbach inequality \cite{Csi,Kul}, we obtain
 \be\label{cs}
 \| g(t) - f_\infty^\delta\|_{L^1(\R_+)} \le  K(f_0) ^{1/2}e^{-\rho(\delta) \,t},
 \ee
 and \fer{xx} implies
  \be\label{zz}
    \| f(t) - f_\infty^\delta\|_{L^1(\R_+)} \le \epsilon +  K(f_0)^{1/2}e^{-\rho(\delta) \,t}.
      \ee
 Therefore, thanks to condition \fer{ce}, for any given $t >0$ it holds
  \be\label{z2}
    \| f(t) - f_\infty^\delta\|_{L^1(\R_+)} \le 2  K(f_0)^{1/2}e^{-\rho(\delta) \,t}.
      \ee
 This concludes the proof.
\end{proof}
      
 \begin{rem}
 Note that, unlike the result of Theorem \ref{expon}, we do not know if the solution to the Fokker--Planck equation with initial density in $L^1(\R_+)$ is exponentially convergent towards equilibrium in relative entropy. However, it is enough to choose an initial density close to equilibrium in the sense of relative entropy, to have exponential convergence in $L^1(\R_+)$ at explicit rate. 
 \end{rem}     

\section{Nonlinear models}\label{n-l}

The modeling assumptions of Section \ref{model2}, leading to the Boltzmann-type equation \fer{line} with the non-Maxwellian kernel \fer{Ker}, and subsequently, via the grazing limit, to the linear Fokker--Planck equation \fer{FP2}, can be easily extended to cover binary trading.
Binary trading between agents, with saving propensity and risk, has been considered in Ref. \cite{CoPaTo05}. Similarly to \fer{lin}, when two agents with wealths $v$ and $w$ interact, the post trade wealths change into
\begin{equation}
  \label{eq.cpt}
  v^* = \Bigl(1-\lambda+\eta_1\Bigr)v + \lambda w, \quad
  w^* = \Bigl(1-\lambda +\eta_2\Bigr)w + \lambda v,
\end{equation}
where $0 <\lambda <1$ is the parameter which identifies the saving propensity of agents.
The coefficients $\eta_1,\eta_2$ are random parameters, which are
independent of $v$ and $w$, and distributed so that always
$v^*,\,w^*\geq  0$, i.e.\ $\eta_1,\,\eta_2\geq \lambda -1$.  Unless these
random variables are centered, i.e.\
$\langle\eta_1\rangle=\langle\eta_2\rangle=0$, it is immediately
seen that the mean wealth is not preserved, but it increases or
decreases exponentially (see the computations in Ref.~ \cite{CoPaTo05}).
For centered $\eta_i$,
\be\label{mean-c}
  \langle v^* + w^* \rangle = (1+\langle\eta_1\rangle) v
  + (1+\langle\eta_2\rangle) w = v + w ,
\ee
implying conservation of the average wealth. If we introduce the kernel \fer{Ker} as in equation \fer{line}, the wealth density satisfies the bilinear kinetic model 
 \begin{equation}
  \label{biline}
 \frac{d}{dt}\int_{\R_+}f(t,w)\varphi(w)\,dw  = 
 \kappa\, \Big \langle \int_{\R_+\times \R_+} \left( vw\right)^\delta \left( \varphi(w^*)-\varphi(w) \right)f(t,v) f(t,w)
\,dv\,dw\Big \rangle.
 \end{equation} 
The model includes the standard Maxwellian  model considered in Ref. \cite{CoPaTo05}, which is obtained for $\delta = 0$.

Note that, in consequence of \fer{mean-c}, the solution to equation \fer{biline} is such that both mass and mean wealth are preserved in time. Therefore, if the initial value is a probability density of mean value $m$, we get for all $t>0$
 \be\label{con2}
 \int_{\R_+} f(t,w) \, dw = 1, \quad \int_{\R_+}w\, f(t,w) \, dw = m.
 \ee 
As discussed in Section \ref{gambling}, the steady state $f_\infty$ of the kinetic model \fer{biline} is related to the steady state $g_\infty$ of the Maxwellian model, that  solves equation \fer{equ} with $w^*$ now given as in \fer{eq.cpt}, by the relation
 \[
 g_\infty(w) = w^\delta f_\infty(w).
 \]
In this case no explicit equilibria are available. Nevertheless,  the analysis of Ref.~ \cite{MaTo07}  shows that the microscopic interaction \fer{eq.cpt}  is such that the steady state of the Maxwellian model is able to describe all interesting behaviors of wealth distribution and the results relative to $g_\infty$ easily translate to $f_\infty$.  

In more details, precise results have been obtained in Ref.~ \cite{MaTo07} if the random variables $\eta_i$, $i =1,2$ assume only two values, that is $\eta_i=\pm r$, where
each sign occurs with probability $1/2$. The factor
$r\in(0,\lambda)$ quantifies the
risk of the market.  Within this choice, the numerical evaluation shows that the increasing of the risk parameter determines the tails of the equilibrium density, passing from \emph{slim tails} for low values to \emph{Pareto tails} for large values. As in the pure gambling case, the relationship between equilibria implies that in the non-Maxwellian model the
Pareto index of the steady state increases of an exponent $\delta$ with respect to the Maxwellian one.

By following step-by-step the procedure of Section \ref{quasi}, one realizes that, by applying the scaling of time $t \to \e t$ and \fer{scal} the weak form of the kinetic model \fer{biline} is well approximated by the weak form of a nonlinear Fokker--Planck equation (with variable coefficients), given by
\begin{equations}
  \label{m-14}
 & \frac{d}{dt}\int_{\R_+}\varphi(w) \,f(t,w)\,dw = \\
  & \kappa \int_{\R_+} \left( \varphi'(w)\,\lambda\, w^\delta(m_{1+\delta}(t) - m_\delta(t) w) + \frac 12 \varphi''(w) \sigma\, m_\delta(t) w^{2+\delta} \right) f(t,w)\, dw. 
 \end{equations}
In equation \fer{m-14} $m_\alpha(t)$ defines the moment of order $\alpha >0$ of the solution. 
By choosing $\varphi(w) = 1, w$ in \fer{m-14}, one shows that both the mass density and the mean wealth are preserved in time, so that \fer{con2} hold.

If boundary conditions on $w=0$ and $w=+\infty$ are added,  such that the boundary terms produced by the integration by parts vanish, equation \fer{m-13} coincides again with the weak form of the (nonlinear) Fokker--Planck equation
 \begin{equation}\label{FP23}
 \frac{\partial f}{\partial t} = \kappa\, m_\delta(t) \frac\sigma 2 \frac{\partial^2 }{\partial w^2}
 \left(w^{2+\delta}f \right )+ \lambda\,
 \frac{\partial}{\partial w}\left( w^\delta(m_\delta(t) \, w - m_{1+\delta}(t)) f\right).
 \end{equation}
It is clear that the presence of unknown time-dependent coefficients makes the qualitative study of equation \fer{FP23} a challenging problem. In particular, it would be interesting to know if the large-time behavior of the solution to \fer{FP23} is exponentially convergent towards equilibrium.


\section{Conclusions}

In this paper, we introduced and studied kinetic models of Boltzmann type, describing the evolution of wealth distribution in a multi-agent society, previously studied with the Maxwellian kernel approximation \cite{CoPaTo05,MaTo07,PT13} The main novelty of the present approach was to introduce a non-Maxwellian kernel in the collision integral, suitable to exclude economically irrelevant interactions. In the linear case, the resulting Fokker--Planck description possesses a steady state distribution of shape identical to that resulting from the Maxwellian one. 
However its solution, in contrast  with the Maxwellian description \cite{TT,TT18}, has been shown to converge exponentially fast in relative Shannon entropy towards equilibrium with an explicit rate for a  class of regular initial data, and to converge exponentially in $L^1(\R_+)$ at explicit rate for all initial densities that have bounded relative entropy with respect to the equilibrium density. 

It is worth noticing that the right large-time behavior of the wealth distribution depends upon an economic improvement of the original model, obtained in terms of a collision kernel in the Boltzmann equation. Resorting to an interaction kernel is clearly a possibility  easily included in a Boltzmann-type description. Indeed, in the classical picture of collisions between molecules, the collision kernel is an essential ingredient that classifies the type of interaction. 

We are confident that this idea could be used in a profitable way in other socio-economic applications of kinetic theory, in order to respect at best the agent's behavior and to obtain at the same time marked improvements of the underlying mathematical models.


\section*{Acknowledgement} This work has been written within the
activities of GNFM and GNAMPA groups  of INdAM (National Institute of
High Mathematics), and partially supported by  MIUR project ``PRIN 2017TEXA3H - Gradient flows, Optimal Transport and Metric Measure Structures''. The research was partially supported by
the Italian Ministry of Education, University and Research (MIUR): Dipartimenti
di Eccellenza Program (2018--2022) - Dept. of Mathematics ``F.
Casorati'', University of Pavia.

\vskip 2cm

\end{document}